\newcommand{\Aug}{\on{Aug}}
\newcommand{\CDGA}{\text{{\bf DA}}}
\begin{document}

\author{Julian Chaidez}
\address{Department of Mathematics\\University of Southern California\\Los Angeles, CA\\90007\\USA}
\email{julian.chaidez@usc.edu}

\title[Contact Homology And Linearization Without DGA Homotopies]{Contact Homology And Linearization Without DGA Homotopies}

\begin{abstract} This article clarifies the status of linearized contact homology given the foundations of the contact dg-algebra established by Pardon. In particular, we prove that the set of isomorphism classes of linearized contact homologies of a closed contact manifold is a contact invariant. 
\end{abstract}

\vspace*{-10pt}

\maketitle


\vspace*{-15pt}

\section{Introduction} \label{sec:introduction} 

Linearized contact homology is a flavor of contact homology associated to a closed contact manifold and an augmentation of its contact dg-algebra. It is computed from a free chain complex generated by closed Reeb orbits whose differential is acquired by a linearization procedure applied to the differential in the contact dg-algebra. Linearized contact homology is closely related to $S^1$-equivariant symplectic homology (cf. Bourgeois-Oancea \cite{bourgeois2017s}) and it has a number of applications to Reeb dynamics (cf. Colin-Honda \cite{colin2013reeb}). 

\vspace{3pt}

Contact homology, and more generally symplectic field theory (SFT), was first articulated by Eliashberg-Givental-Hofer in the seminal paper \cite{eliashberg2010introduction}, where details such as compactness and transversality were deferred to later works. SFT compactness was proven quickly following \cite{eliashberg2010introduction} by Bourgeois-Eliashberg-Hofer-Wysocki-Zehnder \cite{bourgeois2003compactness}. However, transversality in SFT requires non-classical methods (e.g. Kuranishi charts \cite{bao2023semi} or polyfolds \cite{fish2018lectures}) and remained unaddressed until Pardon \cite{pardon2019contact} resolved the genus zero (contact homology) case using the VFC methods of \cite{pardon2016algebraic}  (also see Bao-Honda \cite{bao2023semi}). Here is an informal and simplified version of Pardon's theorem.

\begin{theorem*}[Pardon] \label{thm:main_pardon} The contact dg-algebra $A(Y,\xi)$ of a closed contact manifold $(Y,\xi)$ is well defined up to dg-algebra quasi-isomorphism, canonical up to chain homotopy. Thus the full contact homology
\[CH(Y,\xi) = H(A(Y,\xi),\partial) \qquad\text{is well-defined as a $\Z/2$-graded algebra}\]
\end{theorem*}
\noindent Pardon also established functoriality of full contact homology with respect to exact cobordisms. 

\vspace{3pt}

Although \cite{pardon2019contact} resolved the basic issue of the well-definedness of full contact homology, some foundational questions remain open. In particular, \cite{pardon2019contact} only proved that the dg-algebra maps induced by exact cobordisms are well-defined up to chain homotopy, and not up to the stronger relation of dg-algebra homotopy. This is a particular problem for linearized contact homology. For example, the foundations of \cite{pardon2019contact} do not allow one to associate a linearized contact homology group $LCH(W)$ (well-defined up to canonical isomorphism) to an exact filling $W$.  

\vspace{3pt}

The purpose of this short note is to clarify that a version of well-definedness for linearized contact homology follows from only the foundations of \cite{pardon2019contact} and some basic homological algebra. 

\begin{theorem*}[Main Theorem] \label{thm:main_theorem} The set $\on{Aug}(Y,\xi)$ of weak equivalence classes of augmentations of a closed contact manifold $(Y,\xi)$ is well-defined up to canonical bijection, and the linearized contact homology
\[LCH_{[\epsilon]}(Y,\xi) \qquad\text{of an augmentation class $[\epsilon] \in \on{Aug}(Y,\xi)$}\] is well-defined as an $\Z/2$-graded vectorspace over $\Q$ up to (non-canonical) isomorphism.
\end{theorem*}
\noindent This result can also be modified to account for other gradings and coefficients (see Remark \ref{rmk:gradings_coeffs}). Following Pardon \cite{pardon2019contact}, Theorem \ref{thm:main_theorem} was mentioned by several authors as an unresolved problem, e.g. by Pardon himself \cite[p. 14]{pardon2019contact}, Bao-Honda \cite[Warn 1.05(b)]{bao2023semi}, Moreno-Zhou \cite[Rmk 4.4]{moreno2020landscape} and Hind-Siegel \cite[p. 21]{hind2024symplectic}. We hope that this note will clarify the status of this problem.


\vspace{3pt}

Statements and proofs of our main invariance results appear in Section \ref{sec:contact_homology} after a brief discussion of the preliminary homological algebra in Section \ref{sec:homological_algebra}. In Section \ref{sec:model_categories}, we discuss model categories and some additional useful homological results. We apply these in Section \ref{sec:example_applications} to prove a number of folklore uniqueness results for the linearized contact homology of SADC contact manifolds \cite{zhou2021symplectic}.  

\newpage

\section{Homological Algebra} \label{sec:homological_algebra} We first discuss the homological algebra needed to prove the main results. Our aim is to give an elementary and self-contained treatment of this material. 

\subsection{Pointed DG-Algebras} We start with a brief discussion of dg-algebras including the special class, called Sullivan algebras, that will be of primary interest. For the rest of the section, we fix
\[\text{a coefficient ring $R$ over $\Q$} \qquad\text{and}\qquad \text{a grading group $\Z/2m$ for $m \in \N$}\]
\begin{remark}[Conventions] In this section, we adopt cohomological grading conventions. All modules and algebras have coefficients in $R$ and are graded by $\Z/2m$ unless otherwise specified. \end{remark}
\begin{definition} A \emph{pointed commutative differential graded algebra} $(A,\epsilon)$ is a pair of a graded-commutative differential graded algebra $A$ and a unital map of dg-algebras
\[\epsilon:A \to R \qquad\text{to the coefficient ring $R$ with trivial differential and grading}\]
A map of pointed commutative differential graded algebra is a map of dg-algebras 
\[
 \Phi:(A,\epsilon) \to (B,\mu) \qquad\text{such that}\qquad \mu \circ \Phi = \epsilon
\]
The pair $(A,\epsilon)$ is alternatively called a \emph{pointed cdg-algebra}. The map $\epsilon$ is called an \emph{augmentation} and we denote the kernel of the augmentation by
\[A_\star = \on{ker}(\epsilon)\]\end{definition}

\begin{definition} A \emph{weak equivalence} $\Phi:A \simeq B$ of pointed commutative differential graded algebras is a cdga map that is a quasi-isomorphism, i.e. such that the induced map on homology
\[H\Phi:HA \to HB \qquad\text{is an isomorphism}\]
\end{definition}

\begin{definition}\label{def:Sullivan} A cdg-algebra $A$ is \emph{Sullivan} if there is a graded algebra isomorphism
\[A \simeq SV \qquad\text{with the free graded-commutative algebra $SV$ over a free $R$-module $V$}\]
where $V$ is equipped with free $R$ modules $M_i$ indexed by a well-ordered set $I$ such that
\begin{equation} \label{eq:sullivan_filtration} \bigoplus_{i \in I} M_i \to V \text{ is an isomorphism}\qquad\text{and}\qquad dM_i \subset S\Big(\bigoplus_{j < i} M_j\Big)\end{equation}
We let $V_i \subset V$ denote the direct sum of all sub-modules $M_j \subset V$ with $j \le i$ for a fixed $i \in I$. \end{definition}

\begin{example} The \emph{interval algebra} $(P,d)$ is the cdg-algebra with $P = SU$ where $U$ be the free $R$-module with a generators $s$ in degree $0$ and $t$ in degree $1$, and differential defined by $ds = t$.
\end{example}

We will need a notion of homotopy that is compatible with the dg-algebra structure. There are several different possible definitions, but we will use the following one.

\begin{definition} The \emph{path cdg-algebra} $(PA,P\epsilon)$ of a pointed cdg-algebra $(A,\epsilon)$ is the pointed cdg-algebra defined as follows. The graded unital algebra is given by
\[
PA = R \oplus (A_\star \otimes P) \qquad\text{where $P$ is the interval cdg-algebra}
\]
The differential and product on $PA$ restrict to the standard (graded-commutative) tensor product differential and product on $A_\star \otimes P \subset PA$, and $1 \in R \subset PA$ is the unit of $PA$ which is closed. These properties determine the dg-algebra structure. The augmentation $P\epsilon$ is given by projection
\[
P\epsilon:PA \to R \qquad\text{with}\qquad P\epsilon(r \oplus z) = r \qquad\text{for}\qquad r \oplus z \in R \oplus (A_\star \otimes P)
\]
Finally, there are natural pointed cdg-algebra maps $\Pi_i:PA \to A $ for $i = 0,1$ given by
\[
\Pi_i(r + \sum_{m=0}^\infty x_m \otimes s^m + \sum_{n=0}^\infty y_n \otimes s^n ds) = r + \sum_{m=0}^\infty i^m \cdot x_m  \in R \oplus A_\star \simeq A
\]
In this formula, one should take $s^m = 1$ when $m = 0$ and $i^m = 0$ when $i = 0$ and $m = 0$. \end{definition}

\begin{definition} A \emph{cdga-homotopy} $H:\Phi \simeq \Psi$ between pointed cdg-algebra maps $\Phi$ and $\Psi$ from a pointed cdg-algebra $(A,\epsilon)$ to a pointed cdg-algebra 
 $(B,\mu)$ is a map
 \[
 H:A \to PB \qquad \text{such that}\qquad \Pi_0 \circ H = \Phi \text{ and }\Pi_1 \circ H = \Psi
 \]
\end{definition}

\begin{remark}[Characteristic Zero] Note that the inclusion $R \to P$ is a homotopy equivalence of chain complexes if and only if $R$ is characteristic zero. This partly motivates the requirement that $R$ be characteristic zero (aside from its use in Lemma \ref{lem:main_homological} below).
\end{remark}

\subsection{Main Homological Lemma.} We can now state and prove the main result in homological algebra that we will need for the rest of the paper.

\begin{lemma}[Homotopy Invertibility] \label{lem:main_homological} Let $\Phi:(A,\epsilon) \to (B,\mu)$ be a weak equivalence of pointed cdg-algebras such that $B$ is Sullivan. Then there is a weak-equivalence
\[
\Psi:(B,\mu) \to (A,\epsilon) \qquad\text{with}\qquad \Phi \circ \Psi \simeq \on{Id}_B
\]
\end{lemma}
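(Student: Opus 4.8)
The plan is to package $\Psi$ and the required homotopy as a single lifting datum. Form the \emph{mapping path space}
\[
E \;=\; A \times_{\Phi,\,B,\,\Pi_0} PB \;=\; \{(a,p) \in A \times PB : \Phi(a) = \Pi_0(p)\},
\]
a pointed cdg-algebra, with projection $q:E \to B$, $q(a,p) = \Pi_1(p)$, and the factorization $\Phi = q \circ j$ where $j(a) = (a,\iota\Phi(a))$ and $\iota:B \to PB$ is the constant-path inclusion. The map $q$ is elementarily surjective, and it is a quasi-isomorphism by two-out-of-three: $j$ is a quasi-isomorphism since the projection $\mathrm{pr}_A:E \to A$ is one (being a pullback of $\Pi_0$) and $\mathrm{pr}_A\circ j = \on{Id}_A$. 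That $\Pi_0$ is a quasi-isomorphism is exactly where the characteristic-zero hypothesis enters, via $\iota$ (equivalently $R \to P$) being one. The payoff of this reformulation is that a section $\sigma:B \to E$ of $q$ unwinds precisely into the data we want: setting $\Psi = \mathrm{pr}_A \circ \sigma$ and $K = \mathrm{pr}_{PB}\circ\sigma$, the identity $q\sigma = \on{Id}_B$ gives $\Pi_1 K = \on{Id}_B$ and $\Pi_0 K = \Phi\circ\Psi$, so $K$ is a cdga-homotopy $\Phi\circ\Psi \simeq \on{Id}_B$.

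To produce the section I would induct along the Sullivan filtration of $B = SV$. Write $B_{<i} = S\big(\bigoplus_{j<i}M_j\big)$ and $B_i = S(V_i)$; these are sub-dg-algebras because $dM_i \subset S\big(\bigoplus_{j<i}M_j\big)$ by \eqref{eq:sullivan_filtration}. I build compatible pointed sections $\sigma_i:B_i \to E$ of $q$ by transfinite induction on $i \in I$, taking unions at limit stages, so that all the content sits in the successor step. Since $B_i$ is free as a graded-commutative algebra on $V_i = V_{<i}\oplus M_i$, extending $\sigma$ from $B_{<i}$ to $B_i$ amounts to choosing, for each homogeneous basis element $x \in M_i$, an element $\sigma(x) \in E$ subject only to $d\sigma(x) = \sigma(dx)$ and $q\sigma(x) = x$ (here $dx \in B_{<i}$, so $\sigma(dx)$ is already defined), and then extending multiplicatively.

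Solving this per-generator extension problem is the heart of the matter and the main obstacle. Because $q$ is a surjective quasi-isomorphism, the short exact sequence $0 \to \on{ker}(q) \to E \to B \to 0$ forces $\on{ker}(q)$ to be acyclic. Given $x$, surjectivity yields some $z_0 \in E$ with $qz_0 = x$; then $dz_0 - \sigma(dx)$ is a cycle lying in $\on{ker}(q)$ (apply $q$ and use $q\sigma(dx) = dx = d(qz_0)$), hence equals $du$ for some $u \in \on{ker}(q)$ by acyclicity, and $\sigma(x) := z_0 - u$ solves the problem. The only remaining care is to keep every map pointed, which is routine since all maps in sight are pointed and corrections may be taken in the augmentation ideals. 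This is precisely the step where the weak-equivalence hypothesis does all the work: the obstruction to extending over each generator is a homology class of $\on{ker}(q)$, which vanishes exactly because $\Phi$ (hence $q$) is a quasi-isomorphism.

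Finally, to see that $\Psi$ is itself a weak equivalence, I would use that $\Pi_0,\Pi_1:PB \to B$ agree on homology, both being inverse to the isomorphism $H\iota$. Applying homology to $\Phi\circ\Psi \simeq \on{Id}_B$ then gives $H\Phi \circ H\Psi = \on{Id}_{HB}$; since $H\Phi$ is an isomorphism, so is $H\Psi$, and thus $\Psi$ is a weak equivalence. The delicate point throughout is the per-generator extension, i.e. the verification that the obstruction classes vanish because of the quasi-isomorphism hypothesis; the rest is bookkeeping along the well-ordered index set $I$.
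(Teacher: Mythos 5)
Your proof is correct, and it takes a genuinely different route from the paper's. You package $\Psi$ and the required homotopy into a single section $\sigma$ of the acyclic fibration $q:E=A\times_{\Phi,B,\Pi_0}PB\to B$ and solve one lifting problem along the Sullivan filtration; this cleanly separates the two hypotheses ($R\supseteq\Q$ is used only to make $\Pi_0$, hence $\mathrm{pr}_A$ and then $q$, a surjective quasi-isomorphism, while the hypothesis that $\Phi$ is a weak equivalence is used only to make $\ker q$ acyclic, which is exactly what kills the per-generator obstruction). A pleasant byproduct worth stating explicitly is that pointedness is automatic: $\epsilon_E=\mu\circ q$, so any algebra section of $q$ is pointed, and no preliminary normalization $V\subset\ker\mu$ is needed. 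The paper instead builds $\Psi$ and $H:B\to PB$ by hand over the same filtration, with characteristic zero entering through division by the integers $k$ when solving $H(dv)=dZ$ explicitly in $B_\star\otimes P$ --- which is of course the same ``integration of polynomials'' that makes $R\to P$ a quasi-isomorphism in your argument. Your version is essentially the standard model-categorical proof (cofibrant objects lift against acyclic fibrations, plus Whitehead), which the paper deliberately avoids in favor of an elementary induction, partly because Hinich's model structure is only quoted in the $\Z$-graded setting; note, however, that you never actually invoke the model structure, so your argument is equally valid for $\Z/2m$-gradings. Two small points should be made explicit: the pullback of a quasi-isomorphism is only guaranteed to be a quasi-isomorphism when the map being pulled back is also surjective, so you should record that $\Pi_0$ is surjective before concluding that $\mathrm{pr}_A$ is an acyclic fibration; and the verification that $d\sigma=\sigma d$ holds on all of $B_i$, not just on generators, uses that both sides are derivations along $\sigma$.
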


\begin{proof} By Definition \ref{def:Sullivan}, we may take $B = SV$ where $V$ is a free graded $R$-module equipped with free sub-modules $M_i \subset V$ indexed by a well-ordered set $I$ satisfying (\ref{eq:sullivan_filtration}). Consider the map
\[
\iota:SV \to SV \qquad\text{defined by}\qquad \iota(v) = v - \epsilon(v) \text{ for each $v \in V$}
\]
The pullback of the augmentation $\mu$ by $\iota$ has $V$ in its kernel and the sub-modules $M_i$ still satisfies (\ref{eq:sullivan_filtration}) with respect to the pullback of $d$ by $\iota$. Thus we may assume without loss of generality that
\[
V \subset \on{ker}(\mu) = B_\star
\]
We now use induction on $i \in I$ to construct a cdg-algebra map and a homotopy of the form
\[
\Psi:(SV_i,\mu) \to (A,\epsilon) \quad\text{and}\quad H:(SV_i,\mu) \to (P(SV_i),P\mu) \quad\text{with}\quad \Pi_0 \circ H = \on{Id} \text{ and }\Pi_1 \circ H = \Phi \circ \Psi
\]

{\bf Base Case.} For the base case, let $1 \in I$ denote the minial element of $I$ and fix a basis $S_1 \subset M_1$ of $M_1$ as a free module. Since $1$ is the minimal element of $I$, Definition \ref{def:Sullivan} implies that
\[
d(M_1) = 0 \qquad\text{and thus}\qquad dv = 0 \qquad\text{for each element $v \in S_1$}
\]
Since $\Phi$ is a quasi-isomorphism, there is a closed $a \in A$ with $\Phi(a) = v + db$ for some $b \in B$. We may choose $b \in B_\star = \on{ker}(\mu)$ since $B = R \oplus B_\star$ and $d$ is zero on $R \subset B$. We define $\Psi$ and $H$ by
\[
\Psi(v) = a \qquad\text{and}\qquad H(v) = v + d(b \otimes s) \in B_\star \otimes P \subset PB \qquad\text{on each basis element $v \in S_1$}
\]
To check that $\Psi$ and $H$ commute with the differential on $SV_1$, we simply note that
\[
\Psi(dv) = 0 = da = d\Psi(v) \quad\text{and}\quad H(dv) = 0 = d(v_1 + d(b \otimes s)) = dH(v)
\]
To check that $H$ is a homotopy from $\on{Id}_B$ to $\Phi \circ \Psi$ restricted to $SV_1$, we simply note that
\[
\Pi_0 \circ H(v) = v \quad\text{and}\quad \Pi_1 \circ H(v) = v + db = \Phi \circ \Psi(v)
\]

{\bf Induction Step.} Next, suppose that $\Psi$ and $H$ have been extended to $SV_j$ for all $j < i$ such that
\[
\Pi_0 \circ H = \on{Id}_B \qquad\text{and}\qquad \Pi_1 \circ H = \Phi \circ \Psi \qquad\text{restricted to $SV_j$ for all $j < i$}
\]
Fix a basis $S_i \subset M_i$ of the free $R$-module $S_i$ and fix $v \in S_i$. Note that $dM_i \subset S(\bigoplus_{j < i} M_j)$ by the Sullivan property (\ref{eq:sullivan_filtration}). Thus $H(dv)$ is well defined and $\Pi_0 \circ H(dv) = dv$ by induction. Therefore
\[
H(dv) = dv + \sum_{j=1}^\infty x_j \otimes s^j + \sum_{k=0}^\infty y_k \otimes s^k ds
\]
Here $x_j$ and $y_k$ are elements of $B_\star$, and all but finitely many of them are zero. Since $H(dv)$ is closed, we must have that
\[
dH(dv) = \sum_j dx_j \otimes s^j + \sum_k ((-1)^{|x_{k+1}|} \cdot (k+1) \cdot x_{k+1} + dy_k) \otimes s^k ds = 0
\]
It follows that $dy_{k-1} = -(-1)^{|x_k|} \cdot k \cdot x_k$. We now use the assumption that $R$ is a ring over $\Q$, so that all integers $k \ge 1$ are invertible. This permits us to write
\[
H(dv) = dZ \qquad\text{where}\qquad Z = v + \sum_k (-1)^{|y_k|} \cdot \frac{1}{k} \cdot y_{k-1} \otimes s^k
\]
Let $z = \Pi_1(Z)$. Since $H$ is a homotopy from $\on{Id}_B$ to $\Phi \circ \Psi$ on $S(\oplus_{j < i} M_j)$, we know that
\[
dz = \Pi_1(dZ) = \Pi_1 \circ H(dv) = \Phi \circ \Psi(dv)
\]
Since $\Phi$ is a weak equivalence and $\Psi(dv)$ is a cycle by the inductive construction of $\Psi$, we may choose a $w$ such that
\[
dw = \Psi(dv) \quad\text{and thus}\quad d(z - \Phi(w)) = dz - \Phi(dw) = dz - \Phi \circ \Psi(dv_i) = 0
\]
Then since $z - \Phi(w)$ is closed and $\Phi$ is a weak equivalence, we can pick a closed element $c \in A$ and an element $b \in B_\star$ with
\[
\Phi(c) = z - \Phi(w) + db \qquad\text{or equivalencely}\qquad \Phi(c + w) = z + db
\]
Finally, we can define $\Psi$ and $H$ on the basis element $v$ in terms of $c,w,Z$ and $b$ as follows.
\[
\Psi(v) = c + w \qquad\text{and}\qquad H(v) = Z + d(b \otimes s)
\]
To check that these $\Psi$ and $H$ commute with the differential, we note that
\[
d\Psi(v) = dw = \Psi(dv) \qquad\text{and}\qquad dH(v) = dZ = H(dv)
\]
To check the homotopy property, we simply note that
\[
\Pi_0 \circ H(v) = \Pi_0(Z) = v \qquad\text{and}\qquad \Pi_1 \circ H(v) = \Pi_1(Z + d(b \otimes s)) = z + db = \Phi \circ \Psi(v)
\]
By using this definition for every basis element $v \in S$ and extending to an algebra map, we acquire the desired extension of $\Psi$ and $\Phi$ to $SV_i$. This completes the induction and the lemma. \end{proof}

\subsection{Linearization} We next review the construction of the linearization of a pointed cdg-algebra. 

\begin{definition} The \emph{linearized complex} $LC(A,\epsilon)$ of a pointed cdg-algebra $(A,\epsilon)$ is the complex
\[
LC(A,\epsilon) = A_\star/A_\star^2
\]
with differential induced by the differential of $A$. The \emph{linearized chain map}
\[
L\Phi:LC(A,\epsilon) \to LC(B,\mu) \qquad\text{of a pointed cdg-algebra map }\Phi:(A,\epsilon) \to (B,\mu)
\]
is the induced map on the corresponding quotients. Finally, the \emph{linearized homology} $LH(A,\epsilon)$ of a cdg-algebra $(A,\epsilon)$ is the homology of $LC(A,\epsilon)$. \end{definition}

\begin{remark} In the homological algebra literature, the linearized complex is commonly referred to as the indecomposables. Our language is more common in the contact topology literature.
\end{remark}

\begin{example}[Sullivan Linearization] \label{ex:Sullivan_linearization} Fix a Sullivan cdg-algebra $(SV,\partial)$ with an augmentation $\epsilon$. Let $\iota:SV \to SV$ be the unique graded algebra map defined on $V$ by $\iota(v) = v - \epsilon(v)$. Then we have $\epsilon \circ \iota|_V = 0$ and there is an isomorphism
\[
V \simeq \on{ker}(\epsilon \circ \iota)/\on{ker}(\epsilon \circ \iota)^2 \simeq LC(SV,\epsilon) \qquad\text{induced by the inclusion }V \subset \on{ker}(\epsilon \circ \iota)
\]
The pullback differential $\partial_\epsilon = \iota^*\partial$ is non-decreasing with respect to the word filtration of $SV$ and the differential on $V$ is the word length one part of $\partial_\epsilon|_V$ under the isomorphism with $LC(SV,\epsilon)$. \end{example}

We will require the following elementary results about linearized homology.

\begin{lemma}[Path Object] \label{lem:path_object_linearization} For any pointed cdg-algebra $(A,\epsilon)$, there is a canonical isomorphism
\[LH(PA,P\epsilon) = LH(A,\epsilon) \qquad\text{induced by the linearizations $L\Pi_i$ for $i = 0,1$}\]
\end{lemma}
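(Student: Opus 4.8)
The plan is to reduce everything to the fact, recorded in the Characteristic Zero remark above, that the unit inclusion $\eta\colon R \to P$ is a chain homotopy equivalence precisely because $R$ has characteristic zero. First I would compute $LC(PA,P\epsilon)$ directly from the definitions. Since $P\epsilon$ is the projection of $PA = R \oplus (A_\star \otimes P)$ onto $R$, its augmentation ideal is $(PA)_\star = A_\star \otimes P$. The product on $PA$ restricts to the graded tensor product multiplication on $A_\star \otimes P$, so $(a \otimes p)(a' \otimes p') = \pm (aa') \otimes (pp')$ with $aa' \in A_\star^2$; conversely $aa' \otimes p = \pm (a \otimes p)(a' \otimes 1)$, using the unit $1 \in P$. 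Hence $(PA)_\star^2 = A_\star^2 \otimes P$, and passing to the quotient gives a natural identification of chain complexes
\[
LC(PA,P\epsilon) \;=\; (A_\star \otimes P)/(A_\star^2 \otimes P) \;\cong\; (A_\star/A_\star^2) \otimes P \;=\; LC(A,\epsilon) \otimes P .
\]
The differential on the right is the tensor product differential: the differential of $PA$ restricts to it on $A_\star \otimes P$, and it descends to the quotient because $d$ is a derivation preserving $A_\star$, so that $d(A_\star^2) \subseteq A_\star^2$.

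Next I would track the linearized projections through this identification. Each $\Pi_i$ evaluates the $P$-factor at $s = i$ and sends $ds \mapsto 0$, so on augmentation ideals it sends $\sum_m x_m \otimes s^m + \sum_n y_n \otimes s^n\,ds \mapsto \sum_m i^m x_m$. Under the identification above this reads $L\Pi_i = \on{Id}_{LC(A,\epsilon)} \otimes \mathrm{ev}_i$, where $\mathrm{ev}_i\colon P \to R$ is the evaluation chain map $s \mapsto i$, $ds \mapsto 0$. Thus the lemma will follow once each $\mathrm{ev}_i$ induces an isomorphism after tensoring with $LC(A,\epsilon)$.

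For this I would use the characteristic-zero input. Since $\mathrm{ev}_i \circ \eta = \on{Id}_R$ and $\eta$ is a chain homotopy equivalence by the remark, each $\mathrm{ev}_i$ is homotopic to the homotopy inverse $g$ of $\eta$ (from $\mathrm{ev}_i \simeq \mathrm{ev}_i\,\eta\,g = g$), hence is itself a chain homotopy equivalence $P \to R$. Tensoring the homotopy equivalence $\mathrm{ev}_i$ with the fixed complex $LC(A,\epsilon)$ preserves homotopy equivalences, so $L\Pi_i$ induces the desired isomorphism $LH(PA,P\epsilon) \xrightarrow{\ \sim\ } LH(A,\epsilon)$. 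Concretely, a homotopy inverting $\mathrm{ev}_i$ is the integration operator $s^k\,ds \mapsto \tfrac{1}{k+1}\bigl(s^{k+1} - i^{k+1}\bigr)$, and this is exactly where invertibility of the positive integers --- the hypothesis that $R$ lies over $\Q$ --- enters.

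Finally, canonicity comes for free from the same computation: since $\mathrm{ev}_0 \simeq g \simeq \mathrm{ev}_1$, tensoring with $LC(A,\epsilon)$ gives $L\Pi_0 \simeq L\Pi_1$, so both induce one and the same endpoint-independent isomorphism on homology. I expect the only delicate bookkeeping to be in the first step --- confirming that the algebra and differential of $PA$ genuinely restrict to the tensor product structure on $A_\star \otimes P$, so that the factor $P$ splits off the linearization cleanly; once the homotopy equivalence $P \simeq R$ is in hand, everything afterward is formal.
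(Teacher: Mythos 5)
Your proposal is correct and follows essentially the same route as the paper: identify $LC(PA,P\epsilon)$ with $LC(A,\epsilon)\otimes P$, recognize $L\Pi_i$ as $\on{Id}\otimes\mathrm{ev}_i$, and use the characteristic-zero homotopy equivalence $R\simeq P$ to see that both $L\Pi_0$ and $L\Pi_1$ induce the same isomorphism (inverse to the inclusion $\iota$) on homology. Your added checks that $(PA)_\star^2 = A_\star^2\otimes P$ and the explicit integration homotopy are correct details the paper leaves implicit.
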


\begin{proof} The linearized complex of $(PA,P\epsilon)$ is simply give by
\[
(A_\star \otimes P)/(A_\star \otimes P)^2 \xrightarrow{=} (A_\star/A_\star^2) \otimes P \simeq LC(A,\epsilon) \otimes P
\]
The map $\iota:LC(A,\epsilon) \otimes R \to LC(A,\epsilon) \otimes P$ induces a canonical isomorphism, since $R$ is characteristic zero. The linearized maps $L\Pi_i$ are both equal to the inverse map to $\iota$ on linearized homology. Indeed $L\Pi_i$ is given on chain level by the map
\[
LC(A,\epsilon) \otimes P \to LC(A,\epsilon) \otimes R = LC(A,\epsilon) \qquad\text{with}\qquad x \otimes p \mapsto x \otimes \on{ev}_i(p)
\]
Here $\on{ev}_i:P \to R$ is the map given by
\[
\on{ev}_i(p) = r + \sum_m a_m \cdot i^m \qquad\text{if}\qquad p = r + \sum_m a_m \cdot s^m + \sum_n b_n \cdot s^n ds
\]
In particular, $L\Pi_i \circ \iota = \on{Id}$ and so $L\Pi_i$ is the inverse of $\iota$ on homology for both $i = 0$ and $i = 1$. \end{proof}

\begin{lemma}[Homotopy] \label{lem:homotopy_linearization}Let $\Phi,\Psi:(A,\epsilon) \to (B,\mu)$ be homotopic maps of pointed cdg-algebras. Then
\[
L\Phi = L\Psi \qquad\text{as maps}\qquad LH(A,\epsilon) \to LH(B,\mu)
\]
\end{lemma}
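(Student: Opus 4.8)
The plan is to reduce the statement about homotopic maps to the already-established Path Object lemma (Lemma \ref{lem:path_object_linearization}). By Definition, a cdga-homotopy $H:\Phi \simeq \Psi$ is precisely a pointed cdg-algebra map $H:(A,\epsilon) \to (PB,P\mu)$ satisfying $\Pi_0 \circ H = \Phi$ and $\Pi_1 \circ H = \Psi$. Since linearization is functorial (the linearized chain map $L(-)$ respects composition, as it is induced on the functorial quotient $A_\star/A_\star^2$), applying $LH$ to these two relations gives
\[
LH\Phi = LH\Pi_0 \circ LHH \qquad\text{and}\qquad LH\Psi = LH\Pi_1 \circ LHH,
\]
where $LHH:LH(A,\epsilon) \to LH(PB,P\mu)$ is the induced map on linearized homology.

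The key observation is then that $LH\Pi_0$ and $LH\Pi_1$ agree as maps $LH(PB,P\mu) \to LH(B,\mu)$. This is exactly what the proof of Lemma \ref{lem:path_object_linearization} establishes: both $L\Pi_0$ and $L\Pi_1$ are computed on the linearized complex $LC(B,\mu) \otimes P$ by the evaluation maps $\on{ev}_0$ and $\on{ev}_1$, and both are shown to equal the inverse of the canonical quasi-isomorphism $\iota:LC(B,\mu) \otimes R \to LC(B,\mu) \otimes P$ on homology. In particular $LH\Pi_0 = LH\Pi_1$ as maps out of $LH(PB,P\mu)$. I would cite this equality directly rather than re-derive it, since it has already been proven.

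Combining these two steps finishes the argument: since $LH\Pi_0 = LH\Pi_1$, we get
\[
LH\Phi = LH\Pi_0 \circ LHH = LH\Pi_1 \circ LHH = LH\Psi,
\]
which is the desired equality of maps $LH(A,\epsilon) \to LH(B,\mu)$ (writing $L\Phi = LH\Phi$ for the induced map on linearized homology, matching the statement's notation $L\Phi = L\Psi$).

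The main obstacle, such as it is, is purely bookkeeping rather than conceptual: one must verify that linearization is genuinely functorial on homology so that $L(\Pi_i \circ H) = L\Pi_i \circ LH$ holds, and confirm that the equality $LH\Pi_0 = LH\Pi_1$ extracted from Lemma \ref{lem:path_object_linearization} is an equality of the homology-level maps (not merely an abstract isomorphism of source and target). Both are immediate from the definitions and the explicit chain-level formulas already recorded, so I expect no real difficulty — the entire content has effectively been front-loaded into the Path Object lemma, and this proof is essentially its formal consequence.
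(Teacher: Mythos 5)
Your proposal is correct and follows exactly the paper's own argument: apply the homotopy $H:(A,\epsilon)\to(PB,P\mu)$, use functoriality of linearization, and invoke the fact from Lemma \ref{lem:path_object_linearization} that $L\Pi_0 = L\Pi_1$ on homology to conclude $L\Phi = L\Pi_0\circ LH = L\Pi_1\circ LH = L\Psi$. The only difference is that you spell out the functoriality bookkeeping that the paper leaves implicit.
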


\begin{proof} Let $H:(A,\epsilon) \to (PB,P\mu)$ be the homotopy $\Phi \simeq \Psi$. Then by Lemma \ref{lem:path_object_linearization}, we have
\[L\Phi = L\Pi_0 \circ LH = L\Pi_1 \circ LH = L\Psi \qquad\text{on homology} \qedhere\]
\end{proof}

\begin{lemma}[Sullivan] \label{lem:sullivan_linearization} Let $\Phi:(A,\epsilon) \to (B,\mu)$ be a morphism of pointed Sullivan cdg-algebras. Then $\Phi$ is a weak equivalence if and only if the induced map $L\Phi$ on the linearized complex if a quasi-isomorphism.
\end{lemma}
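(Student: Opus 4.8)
The plan is to prove the two implications by different means: the forward direction follows formally from the results already in hand, while the reverse direction is a filtration argument whose convergence is the delicate point.

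\emph{Forward implication.} Suppose $\Phi$ is a weak equivalence. Since $B$ is Sullivan, Lemma \ref{lem:main_homological} produces a weak equivalence $\Psi:(B,\mu)\to(A,\epsilon)$ with $\Phi\circ\Psi\simeq\on{Id}_B$. The relation $H\Phi\circ H\Psi=\on{Id}$ together with the invertibility of $H\Phi$ gives $H\Psi=(H\Phi)^{-1}$, so $\Psi$ is itself a weak equivalence. As $A$ is also Sullivan, a second application of Lemma \ref{lem:main_homological}, now to $\Psi$, yields a map $\Xi:(A,\epsilon)\to(B,\mu)$ with $\Psi\circ\Xi\simeq\on{Id}_A$. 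Now I would linearize: linearization is a functor, so $L(\Phi\circ\Psi)=L\Phi\circ L\Psi$ and $L(\Psi\circ\Xi)=L\Psi\circ L\Xi$, and Lemma \ref{lem:homotopy_linearization} identifies the map induced by each homotopic composite with the identity on linearized homology. Thus $LH\Phi\circ LH\Psi=\on{Id}$ and $LH\Psi\circ LH\Xi=\on{Id}$, exhibiting $LH\Psi$ as both split injective and split surjective, hence an isomorphism. Therefore $LH\Phi=(LH\Psi)^{-1}$ is an isomorphism and $L\Phi$ is a quasi-isomorphism.

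\emph{Reverse implication.} Suppose $L\Phi$ is a quasi-isomorphism. Writing $A=SV$ and $B=SW$ and applying the change of coordinates $\iota$ of Example \ref{ex:Sullivan_linearization} to source and target (which alters none of the isomorphism types involved), I may assume the augmentations are the standard ones killing the generators, so that $V\subset A_\star$ and $W\subset B_\star$, the differentials are non-decreasing for the word-length filtrations $F^pSV=S^{\ge p}V$ and $F^pSW=S^{\ge p}W$ and hence preserve them, and, since $\Phi$ is pointed, $\Phi(V)\subset B_\star=S^{\ge 1}W$, so that $\Phi$ is filtered. On associated graded the differential is the derivation extension of its word-length-one part, which by Example \ref{ex:Sullivan_linearization} is precisely the linearized differential; concretely $\on{gr}A\cong S\big(LC(A,\epsilon)\big)$ with the derivation induced by $d$ on $LC(A,\epsilon)$, and $\on{gr}\Phi\cong S(L\Phi)$. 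Because $R$ is a $\Q$-algebra, the symmetrizer $\tfrac{1}{n!}\sum_{\sigma\in\Sigma_n}\sigma$ realizes each $S^nC$ as a homological retract of $C^{\otimes n}$, and the K\"unneth theorem then gives $H\big(S^nC\big)\cong S^n(HC)$; hence $H(\on{gr}A)\cong S\big(LH(A,\epsilon)\big)$ and $H(\on{gr}\Phi)\cong S(LH\Phi)$. The hypothesis that $L\Phi$ is a quasi-isomorphism therefore makes $\on{gr}\Phi$ a quasi-isomorphism, and a comparison of the spectral sequences of the two word-length filtrations upgrades this to the assertion that $\Phi$ induces an isomorphism on homology, i.e.\ that $\Phi$ is a weak equivalence.

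The step I expect to be the main obstacle is exactly this last passage, from $\on{gr}\Phi$ being a quasi-isomorphism to $\Phi$ being one. The word-length filtration is exhaustive and Hausdorff, but in a fixed $\Z/2m$-degree it need not be bounded once $V$ contains generators whose powers stay in a single degree (for instance even generators in degree $0$), and it is not complete because $SV$ is the direct sum, rather than the product, of its word-length pieces. Without such control the successive-approximation argument underlying the comparison theorem need not terminate inside $SV$, so securing convergence is the essential point. I would therefore isolate the needed hypothesis explicitly, namely boundedness of the word-length filtration in each degree, which in the intended applications is supplied by the action filtration on the contact dg-algebra (finitely many Reeb orbits below each action level), and then invoke the comparison theorem for the resulting convergent spectral sequences to conclude.
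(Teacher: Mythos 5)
Your forward implication is the paper's argument essentially verbatim: two applications of Lemma \ref{lem:main_homological} (to $\Phi$, then to the resulting $\Psi$) combined with Lemma \ref{lem:homotopy_linearization} exhibit $LH\Psi$ as split injective and split surjective, hence $LH\Phi$ as an isomorphism; this half is complete, and it is the only half that feeds into Lemma \ref{lem:linearized_homology_class} and the main theorem. Your reverse implication also follows the paper's route --- word-length filtration, identification of the associated graded with $S(LC)$, the characteristic-zero K\"unneth step, then a spectral sequence comparison --- and you are right that convergence of that comparison is the genuine issue (the paper dispatches it with a citation).

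The gap is in your proposed repair. You want to add the hypothesis that the word-length filtration is bounded in each degree and you assert that this is ``supplied by the action filtration.'' It is not: properness of the action filtration makes each $V_L$ finite dimensional, but the word-length filtration on $SV_L$ is already unbounded in a fixed $\Z/2$-degree as soon as $V_L$ contains a single even generator, since all powers of that generator sit in degree $0$. So the condition you isolate fails in exactly the situation the lemma is written for. Moreover, the worry is not merely technical: without some finiteness hypothesis the reverse implication is false. Take $B=\Q[x_0,x_1,\dots]\otimes\Lambda(y_0,y_1,\dots)$ with $|x_n|=0$, $|y_n|=1$, $dx_n=0$, $dy_n=x_n-x_{n+1}^2$, augmented by killing all generators. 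This is Sullivan in the sense of Definition \ref{def:Sullivan} (order all $x_n$ before all $y_n$), its linearized complex (with $\bar d y_n=x_n$) is acyclic, yet a Koszul computation gives $H(B)=\on{colim}\big(\Q[t]\xrightarrow{t\mapsto t^2}\Q[t]\to\cdots\big)\ne\Q$, so the unit $\Q\to B$ is a linearized quasi-isomorphism that is not a weak equivalence. Any correct completion of the reverse direction must therefore invoke structure genuinely present in the contact dg-algebra (for instance the strict action decrease of the differential) rather than a degreewise word-length bound, or else the statement must be restricted to a class of Sullivan algebras that excludes examples like the one above.
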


\begin{proof} If $\Phi$ is a weak equivalence, then by Lemma \ref{lem:main_homological}, there is a weak equivalence $\Psi:(B,\mu) \to (A,\epsilon)$ such that $\Phi \circ \Psi \simeq \on{Id}_B$. By Lemma \ref{lem:homotopy_linearization}, this implies that $L\Phi \circ L\Psi = \on{Id}$ on homology. Thus $L\Phi$ is surjective and $L\Psi$ is injective on linearized homology. The same argument applied to $\Psi$ implies that $L\Psi$ is onto on homology. Thus $L\Phi$ is invertible on homology with inverse $L\Psi$.

\vspace{3pt}

Conversely, suppose that $L\Phi$ is a quasi-isomorphism. Choose identifications $SU \simeq A$ and $SV \simeq B$ for graded vector spaces $U$ and $V$. Following Example \ref{ex:Sullivan_linearization}, we may assume that the differentials $SU$ and $SV$ are non-decreasing for the word length filtration. Then by Example \ref{ex:Sullivan_linearization} the map on homology $HU \to HV$ is an isomorphism where $U$ and $V$ are equipped with the differential given by the word length one part of the differentials on $A$ and $B$. Since $R$ is characteristic zero, this implies that the map
\[
H(\on{Gr} A) = S(HU) \to S(HV) = H(\on{Gr} B)
\]
is an isomorphism, where $\on{Gr} A$ and $\on{Gr} B$ are the associated graded complexes with respect to the word filtration. A standard argument with the spectral sequence associated to the word length filtration then implies that the map $HA \to HB$ is an isomorphism (cf. Pardon \cite[Lem 1.2]{pardon2019contact}). \end{proof}

\subsection{Augmentations} We conclude this section by shifting perspective, to considering augmentations on a fixed cdg-algebra.  

\begin{definition} \label{def:weak_equivalence_of_augs} A \emph{weak equivalence} $\epsilon \simeq \mu$ of augmentations $\epsilon$ and $\mu$ of a cdg-algebra $A$ is a weak equivalence of pointed cdg-algebra
\[(A,\epsilon) \to (A,\mu)\]\end{definition}

\begin{lemma} Weak equivalence is an equivalence relation on augmentations if $A$ is Sullivan.
\end{lemma}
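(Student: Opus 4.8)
The plan is to verify the three defining properties of an equivalence relation—reflexivity, transitivity, and symmetry—directly from the definitions, arranging the argument so that the Sullivan hypothesis is invoked only where it is genuinely needed, namely for symmetry.

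First I would dispatch reflexivity and transitivity, neither of which uses that $A$ is Sullivan. For reflexivity, the identity $\on{Id}_A:(A,\epsilon) \to (A,\epsilon)$ is a pointed cdg-algebra map, since $\epsilon \circ \on{Id}_A = \epsilon$, and it is trivially a quasi-isomorphism; hence $\epsilon \simeq \epsilon$ for every augmentation $\epsilon$. For transitivity, suppose $\Phi:(A,\epsilon) \to (A,\mu)$ and $\Psi:(A,\mu) \to (A,\nu)$ are weak equivalences witnessing $\epsilon \simeq \mu$ and $\mu \simeq \nu$. Then the composite $\Psi \circ \Phi$ is a cdg-algebra map satisfying the pointed condition $\nu \circ (\Psi \circ \Phi) = (\nu \circ \Psi) \circ \Phi = \mu \circ \Phi = \epsilon$, and a composite of quasi-isomorphisms is again a quasi-isomorphism, so $\Psi \circ \Phi$ witnesses $\epsilon \simeq \nu$.

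The substantive step is symmetry, and this is exactly where being Sullivan enters. Given a weak equivalence $\Phi:(A,\epsilon) \to (A,\mu)$ witnessing $\epsilon \simeq \mu$, I would produce a weak equivalence in the reverse direction by appealing to Lemma \ref{lem:main_homological} (Homotopy Invertibility). The point is that its target $(A,\mu)$ is a pointed Sullivan cdg-algebra, since whether a cdg-algebra is Sullivan depends only on its underlying differential graded structure $A$ and not on the choice of augmentation. Applying the lemma with $B = (A,\mu)$ yields a weak equivalence $\Psi:(A,\mu) \to (A,\epsilon)$ (and even the stronger fact $\Phi \circ \Psi \simeq \on{Id}$, which is not needed here). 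By definition $\Psi$ witnesses $\mu \simeq \epsilon$, so the relation is symmetric.

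The main obstacle is precisely this symmetry. A weak equivalence of pointed cdg-algebras is a quasi-isomorphism that is simultaneously an algebra map compatible with the augmentations, and a quasi-isomorphism of dg-algebras need not admit an inverse that is again a map of dg-algebras. The Sullivan (cofibrancy) hypothesis is exactly the condition that permits inversion up to cdga-homotopy, and this is the entire force of Lemma \ref{lem:main_homological}. Once that lemma is available the verification is immediate; without the Sullivan assumption, symmetry would fail in general.
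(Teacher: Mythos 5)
Your proposal is correct and follows exactly the paper's (much terser) proof: reflexivity and transitivity are routine, and symmetry is obtained by applying Lemma \ref{lem:main_homological} to reverse a weak equivalence $(A,\epsilon)\to(A,\mu)$, using that the Sullivan property of the target depends only on the underlying cdg-algebra $A$ and not on the augmentation. Your added remarks on why the Sullivan hypothesis is needed precisely for symmetry are accurate elaborations of the same argument.
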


\begin{proof} Reflexivity and transitivity are trivial. Symmetry is immediate from Lemma \ref{lem:main_homological}.
\end{proof}

\begin{definition} The set of \emph{augmentation classes} $\Aug(A)$ of a Sullivan cdg-algebra $A$ is the set of augmentations $\epsilon:A \to R$ modulo weak equivalence of augmentations.
\end{definition}

\begin{lemma} \label{lem:weak_equivalent_Aug} Let $A$ and $B$ be weakly equivalent Sullivan algebras. Then there is a canonical bijection
\[\Aug(A) = \Aug(B)\]
\end{lemma}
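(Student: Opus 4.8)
The plan is to represent the weak equivalence concretely and reduce everything to the bookkeeping of precomposition of augmentations. First I would fix a cdga quasi-isomorphism $\Phi:A\to B$ together with a quasi-isomorphism $\Psi:B\to A$ in the reverse direction. The forward map is part of the hypothesis; the backward map $\Psi$ is produced by repeating the inductive construction of Lemma \ref{lem:main_homological} over the Sullivan filtration of $B$, but carrying along an ordinary chain homotopy $\mathrm{id}_B\simeq\Phi\circ\Psi$ in place of the path-object homotopy. This substitution is what lets one avoid choosing an augmentation on $B$: at each stage $\Phi$ sends $\Psi(dv)$ to $d(v+h(dv))$, so $\Psi(dv)$ is a boundary in $A$ by injectivity of $\Phi$ on homology, and the chain homotopy extends after correcting the value $\Psi(v)$ by a closed element of $A$ chosen via surjectivity of $\Phi_*$. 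Since $\Phi\circ\Psi\simeq\mathrm{id}_B$, the map $\Psi$ is automatically a quasi-isomorphism.

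With $\Phi$ and $\Psi$ in hand, I would define pullback maps
\[
\Phi^*:\Aug(B)\to\Aug(A),\quad [\mu]\mapsto[\mu\circ\Phi],\qquad \Psi^*:\Aug(A)\to\Aug(B),\quad [\epsilon]\mapsto[\epsilon\circ\Psi].
\]
The crux is to show these descend to weak-equivalence classes. Suppose $[\mu]=[\mu']$ in $\Aug(B)$, witnessed by a cdga quasi-isomorphism $F:B\to B$ with $\mu'\circ F=\mu$ as in Definition \ref{def:weak_equivalence_of_augs}. Applying Lemma \ref{lem:main_homological} to the pointed weak equivalence $\Phi:(A,\mu'\circ\Phi)\to(B,\mu')$ yields a weak equivalence $\Psi':(B,\mu')\to(A,\mu'\circ\Phi)$ which in particular satisfies $(\mu'\circ\Phi)\circ\Psi'=\mu'$ on the nose. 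Then $G:=\Psi'\circ F\circ\Phi$ is a cdga quasi-isomorphism of $A$ with $(\mu'\circ\Phi)\circ G=\mu\circ\Phi$, so $G$ exhibits a weak equivalence $(A,\mu\circ\Phi)\to(A,\mu'\circ\Phi)$ and $[\mu\circ\Phi]=[\mu'\circ\Phi]$. This is the step where homotopy invertibility is indispensable: it upgrades a witnessing self-quasi-isomorphism on $B$ into one on $A$ by supplying a genuine one-sided inverse $\Psi'$ compatible with the augmentations.

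Granting well-definedness, bijectivity is formal. For any augmentation $\epsilon$ of $A$, the composite $\Psi\circ\Phi$ is a cdga quasi-isomorphism of $A$ with $\epsilon\circ(\Psi\circ\Phi)=(\epsilon\circ\Psi)\circ\Phi$, so $\Psi\circ\Phi$ directly witnesses a weak equivalence $(A,(\epsilon\circ\Psi)\circ\Phi)\to(A,\epsilon)$; hence $\Phi^*\circ\Psi^*=\mathrm{id}_{\Aug(A)}$, and symmetrically $\Psi^*\circ\Phi^*=\mathrm{id}_{\Aug(B)}$ using $\Phi\circ\Psi$. Thus $\Phi^*$ is a bijection with inverse $\Psi^*$. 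For canonicity I would then check independence of the chosen representatives: given two quasi-isomorphisms $\Phi,\Phi':A\to B$ and any $\mu\in\Aug(B)$, Lemma \ref{lem:main_homological} applied to $\Phi:(A,\mu\circ\Phi)\to(B,\mu)$ gives $\Psi:(B,\mu)\to(A,\mu\circ\Phi)$ with $(\mu\circ\Phi)\circ\Psi=\mu$, whence $\Psi\circ\Phi'$ is a self-quasi-isomorphism of $A$ with $(\mu\circ\Phi)\circ(\Psi\circ\Phi')=\mu\circ\Phi'$, so $[\mu\circ\Phi]=[\mu\circ\Phi']$ and $\Phi^*=\Phi'^*$. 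The same argument makes the assignment compatible with composition, so a zig-zag of weak equivalences through Sullivan algebras induces a single well-defined bijection.

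I expect the main obstacle to be exactly the well-definedness step, since everything else is either formal or a direct repetition of the inductive argument of Lemma \ref{lem:main_homological}. The difficulty is that the relation defining $\Aug$ is an \emph{orbit under self-quasi-isomorphisms} rather than a homotopy relation on augmentation maps, and converting this into something functorial in $\Phi$ forces one to replace an arbitrary witnessing quasi-isomorphism on $B$ by a strictly augmentation-compatible section on $A$; producing that section is precisely the content of homotopy invertibility.
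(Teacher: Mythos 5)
Your proof is correct and follows essentially the same route as the paper's: pullback along $\Phi$, well-definedness and independence of $\Phi$ via the strictly pointed one-sided inverse supplied by Lemma \ref{lem:main_homological}, and bijectivity via $\Psi^*$. The only (harmless) deviation is that you build the reverse quasi-isomorphism $\Psi:B\to A$ by an unpointed chain-homotopy variant of the induction, where the paper simply invokes an arbitrary weak equivalence $B\to A$ obtained from Lemma \ref{lem:main_homological}.
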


\begin{proof} Choose a weak equivalence $\Phi:A \to B$ and consider the map on augmentations 
\begin{equation} \label{eq:aug_map}
\Phi^*:\on{Aug}(B) \to \on{Aug}(A) \qquad \text{given by}\qquad \Phi^*[\epsilon] = [\epsilon \circ \Phi]
\end{equation}
First, note that the map (\ref{eq:aug_map}) is well-defined. Indeed, if $\mu \simeq \epsilon$, then there is a weak equivalence $\Psi$ of $B$ with $\mu = \epsilon \circ \Psi$. Moreover, by Lemma \ref{lem:main_homological}, there is a weak equivalence $\Phi':B \to A$ such that $\epsilon \circ \Phi \circ \Phi' = \epsilon$. Therefore we may write
\[\mu \circ \Phi = \epsilon \circ \Psi \circ \Phi =  \epsilon \circ \Phi \circ (\Phi' \circ \Psi \circ \Phi)\]
The paranthesized term is a weak equivalence of $A$ and so $\mu \circ \Phi \simeq \epsilon \circ \Phi$ in $\Aug(A)$. Next, note that the map (\ref{eq:aug_map}) is independent of $\Phi$. If $\Psi:A \to B$ is a different weak equivalence, we write
\[
\epsilon \circ \Psi = \epsilon \circ \Phi \circ (\Phi' \circ \Psi)
\]
where the parenthesized term is again a weak equivalence of $A$, so that $\epsilon \circ \Psi \simeq \epsilon \circ \Phi$. Finally, the map (\ref{eq:aug_map}) is invertible. Indeed, the inverse is given by
\[
\Psi^*:\on{Aug}(A) \to \on{Aug}(B) \qquad \text{given by}\qquad \Psi^*[\epsilon] = [\epsilon \circ \Psi]
\]
for any weak equivalence $\Psi:B \to A$. This proves that (\ref{eq:aug_map}) is a canonical bijection.  \end{proof}

Finally, we have the following result that is an immediate consequence of Lemma \ref{lem:sullivan_linearization}.

\begin{lemma} \label{lem:linearized_homology_class} Let $A$ be a weak equivalence class of Sullivan cdg-algebras. Then there is a canonically associated set of augmentation classes $\on{Aug}(A)$ and an isomorphism class of graded module
\[LH(A,[\epsilon]) \qquad\text{for each}\qquad [\epsilon] \in \Aug(A,[\epsilon])\]
\end{lemma}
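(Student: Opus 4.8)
The statement packages two independent assertions: that a single set $\Aug(A)$ of augmentation classes is canonically attached to the weak equivalence class, and that for each such class $[\epsilon]$ the graded module $LH(A,\epsilon)$ is well-defined up to isomorphism. The plan is to derive the first from Lemma \ref{lem:weak_equivalent_Aug} together with a coherence check, and the second from Lemma \ref{lem:sullivan_linearization}.

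For the first assertion, I would fix representatives of the weak equivalence class and invoke Lemma \ref{lem:weak_equivalent_Aug} to obtain, for each pair $A,B$ and each weak equivalence $\Phi:A \to B$, a canonical bijection $\Phi^*:\Aug(B) \to \Aug(A)$ that is independent of $\Phi$. To promote this family of pairwise identifications to a single well-defined set, I would verify the cocycle condition: for weak equivalences $\Phi:A \to B$ and $\Theta:B \to C$ one has $(\Theta \circ \Phi)^*[\nu] = [\nu \circ \Theta \circ \Phi] = \Phi^*(\Theta^*[\nu])$, so that $(\Theta \circ \Phi)^* = \Phi^* \circ \Theta^*$. Since $\Theta \circ \Phi$ is again a weak equivalence and the bijections of Lemma \ref{lem:weak_equivalent_Aug} do not depend on the chosen map, these identifications compose coherently, and $\Aug(A)$ is therefore well-defined as a set canonically associated to the class.

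For the second assertion, I would define $LH(A,[\epsilon])$ to be the isomorphism class of $LH(A,\epsilon)$ for any representative $A$ and any augmentation $\epsilon$ representing $[\epsilon]$, and then check independence of both choices. First, with $A$ fixed and $\epsilon \simeq \epsilon'$ two weakly equivalent augmentations, a weak equivalence $(A,\epsilon) \to (A,\epsilon')$ is a morphism of pointed Sullivan cdg-algebras (being Sullivan is a property of $A$ alone), so Lemma \ref{lem:sullivan_linearization} shows that the linearized map is a quasi-isomorphism and hence $LH(A,\epsilon) \cong LH(A,\epsilon')$. Second, to change the representative algebra, I would pick a weak equivalence $\Phi:A \to B$ and let $\mu$ be an augmentation of $B$ whose class corresponds to $[\epsilon]$, so that $\epsilon \simeq \mu \circ \Phi$. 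The map $\Phi$ is then a pointed weak equivalence $(A,\mu \circ \Phi) \to (B,\mu)$ between Sullivan algebras, and Lemma \ref{lem:sullivan_linearization} again yields $LH(A,\mu \circ \Phi) \cong LH(B,\mu)$; combining this with the first step gives $LH(A,\epsilon) \cong LH(B,\mu)$.

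The computations here are all routine consequences of the earlier lemmas, so I do not expect a genuine obstacle. The one point demanding care is the bookkeeping of augmentations: linearized homology genuinely depends on the augmentation, so the comparison across weakly equivalent algebras must be set up at matching augmentations, which is exactly the role of the pullback $\mu \mapsto \mu \circ \Phi$ and of the coherence established in the first step. I would also emphasize that, because the weak equivalences and their homotopy inverses are not unique, the resulting isomorphisms of $LH$ depend on these choices; hence only the isomorphism class, and not a canonical isomorphism, is produced, consistent with the non-canonical clause in the Main Theorem.
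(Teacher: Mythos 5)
Your argument is correct and is exactly the one the paper intends: the paper states this lemma as an immediate consequence of Lemma \ref{lem:sullivan_linearization} (together with Lemma \ref{lem:weak_equivalent_Aug}), and your two steps --- identifying augmentation sets via the canonical pullback bijections and comparing linearized homologies at matching augmentations using Lemma \ref{lem:sullivan_linearization} --- simply spell out that deduction. No issues.
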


\section{Contact Homology} \label{sec:contact_homology}

We now apply the main homological lemma (Lemma \ref{lem:main_homological}) to the contact dg-algebra to prove  the main invariance result (Theorem \ref{thm:main_theorem})   from the introduction.

\subsection{Contact DG-Algebra} We start by briefly reviewing the construction of the contact dg-algebra as presented by Pardon \cite{pardon2019contact}. Fix a closed contact manifold
\[(Y,\xi) \qquad\text{of dimension $2n-1$}\]

{\bf Preliminaries.} We start with some preliminary terminology on Reeb orbits. Fix a periodic (and possibly multiply covered) Reeb orbit $\Gamma$ of a contact form $\alpha$ of period $L$. Recall that $\Gamma$ is \emph{non-degenerate} if the linearized return map of the Reeb flow $\Phi$ restricted to $\xi$ satisfies
\[
T\Phi_L|_\xi:\xi_P \to \xi_P\qquad \text{has no $1$-eigenvalues for every $P \in \Gamma$}
\]
Every non-degenerate Reeb orbit $\Gamma$ has a well-defined mod $2$ Conley-Zehnder index and mod $2$ SFT grading \cite[Def 2.48]{pardon2019contact} given by
\[
\CZ(\Gamma) = \CZ(\Gamma,\tau) \in \Z/2 \qquad\text{and}\qquad |\Gamma| = n - 3 + \CZ(\Gamma) \in \Z/2
\]
To each basepoint $P \in \Gamma$, one can assign a rank one graded $\Z$-module, the \emph{orientation line}
\[
\mathfrak{o}_{\Gamma,P} \qquad\text{concentrated in grading }|\Gamma|
\]
The orientation lines naturally form a rank one local system on $\Gamma$ and $\Gamma$ is \emph{good} if this local system is trivial \cite[Def 2.49]{pardon2019contact}. Thus any good orbit has a natural orientation line $\mathfrak{o}_\Gamma$ independent of $P$. Finally, a contact form $\alpha$ is called \emph{non-degenerate} if every closed Reeb orbit is non-degenerate.

\vspace{3pt}

{\bf Basic Definition.} The contact dg-algebra of $(Y,\xi)$ is a differential graded algebra denoted
\[
A(Y,\alpha) \qquad\text{with differential} \qquad \partial_{J,\theta}:A(Y,\alpha) \to A(Y,\alpha)
\]
associated to choices of a non-degenerate contact form $\alpha$ on $(Y,\xi)$, a compatible almost-complex structure $J$ on $\xi$ \cite[p. 3-4]{pardon2019contact} and a perturbation datum $\theta \in \Theta(Y,\alpha,J)$ \cite[p. 5]{pardon2019contact}. A triple $(\alpha,J,\theta)$ will be referred to as \emph{Floer data}. The algebra $A(Y,\alpha)$ is the free graded-commutative algebra generated by the direct sum of the orientation lines (tensored with $\Q$) over all good orbits.
\[
V(Y,\alpha) = \bigoplus_{\Gamma\text{ good}} \mathfrak{o}_{\Gamma} \otimes \Q  \qquad\text{and}\qquad A(Y,\alpha) = SV(Y,\alpha) 
\]
The differential $\partial_{J,\theta}$ is constructed by counting points in compactified moduli spaces of pseudo-holomorphic curves of zero virtual dimension in the symplectization $\R \times Y$ \cite[\S 2.3 and 2.10]{pardon2019contact}. 

\vspace{3pt}

{\bf Homology Grading.} The contact dg-algebra admits a dg-algebra grading by the first homology group of $Y$, denoted by
\[
A(Y,\alpha) = \bigoplus_{Z \in H_1(Y)} A_Z(Y,\alpha)
\]
Here $A_Z(Y,\alpha)$ is spanned by monomials $x_1 \dots x_k$ where $x_i \in \mathfrak{o}_{\Gamma_i}$ and $Z = [\Gamma_1] + \dots + [\Gamma_k]$. This grading is respected by the cobordism maps in contact homology, and so there is a decomposition
\[
A(Y,\xi) =  \bigoplus_{Z \in H_1(Y)} A_Z(Y,\xi) \qquad\text{well defined up to quasi-isomorphism}
\]
The component $A_0(Y,\xi) \subset A(Y,\xi)$ is a differential graded sub-algebra that admits a $\Z/2m_\xi$-grading refining the $\Z/2$-grading, where $m_\xi$ is the divisibility of the first Chern class.
    \[m_\xi := \on{min}\{ c_1(\xi) \cdot A \; : \; A \in H_2(Y;\Z)\}\]

\vspace{3pt}

{\bf Action Filtration.} The contact dg-algebra $A(Y,\alpha)$ has a natural $\R$-filtration by dg-subalgebras
\[A^L(Y,\alpha) \subset A(Y,\alpha)\]
Precisely, the vectorspace $V(Y,\alpha)$ has an $\R$-filtration via the periods of the closed orbits.
\[
V_L(Y,\alpha) = \bigoplus_{\Gamma \in \mathcal{P}(L)} \mathfrak{o}_\Gamma \otimes \Q \qquad\text{where $\mathcal{P}(L)$ is the set of good orbits of $\alpha$ with period $L$ or less}
\]
Since the contact form $\alpha$ is non-degenerate, the set of closed Reeb orbits of period $L$ or less is finite. It follows that $V_L(Y,\alpha)$ is a proper and complete $\R$-filtration on $V(Y,\alpha)$, meaning that
\[
V_L(Y,\alpha)\text{ is finite dimensional for any $L$} \qquad\text{and}\qquad V(Y,\alpha) = \underset{L}{\on{colim}} V(Y,\alpha)
\]
Moreover, the differential $\partial_{J,\theta}$ decreases the filtration (cf. \cite[\S 1.8, Bullet 3]{pardon2019contact}) in the sense that
\[
\partial_{J,\theta}(V_L(Y,\alpha)) \subset SV_K(Y,\alpha) \qquad\text{for some }K < L
\]
The action filtration is then defined by $A^L(Y,\alpha) = SV_L(Y,\alpha)$. In the language of Section \ref{sec:homological_algebra}, the existence of the action filtration implies that the contact dg-algebras are Sullivan.

\begin{lemma} \label{lem:contact_dga_is_Sullivan} The contact dg-algebra $(A(Y,\alpha),\partial_{J,\theta})$ is Sullivan for any choice of Floer data $(\alpha,J,\theta)$.
\end{lemma}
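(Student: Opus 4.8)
The plan is to read off the Sullivan filtration directly from the action filtration by periods. By construction $A(Y,\alpha) = SV(Y,\alpha)$ is already the free graded-commutative algebra on the free graded $\Q$-module $V(Y,\alpha) = \bigoplus_{\Gamma\text{ good}} \mathfrak{o}_\Gamma \otimes \Q$, so the first requirement of Definition \ref{def:Sullivan} — a graded algebra isomorphism with some $SV$ — holds with the identity. It therefore remains only to exhibit a well-ordered decomposition of $V(Y,\alpha)$ into free submodules along which $\partial_{J,\theta}$ is strictly lower-triangular.

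First I would record that the set $\mathcal{L} \subset (0,\infty)$ of periods of good Reeb orbits of $\alpha$ is well-ordered. Since $\alpha$ is non-degenerate, each $\mathcal{P}(L)$ is finite, so $\mathcal{L}$ is a discrete subset of the positive reals with $\mathcal{L} \cap [0,L]$ finite for every $L$; hence $\mathcal{L}$ is order-isomorphic to $\N$ (or to a finite ordinal) and in particular is well-ordered by the restriction of the order on $\R$. I then take the index set $I = \mathcal{L}$ and set
\[
M_{L} = \bigoplus_{\substack{\Gamma\text{ good}\\ \operatorname{period}(\Gamma) = L}} \mathfrak{o}_\Gamma \otimes \Q,
\]
which is a finite-rank, hence free, graded $\Q$-module. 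By definition $\bigoplus_{L \in \mathcal{L}} M_L = V(Y,\alpha)$, and the submodule $V_L \subset V(Y,\alpha)$ of Definition \ref{def:Sullivan} coincides with the action-filtration piece $V_L(Y,\alpha)$, the span of orbits of period at most $L$.

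The key step is the triangularity condition $dM_{L} \subset S\big(\bigoplus_{L' < L} M_{L'}\big)$, and this is exactly where the strict action-decreasing property is used. A generator of $M_L$ lies in $V_L(Y,\alpha)$, so by the cited property of Pardon's differential there is $K < L$ with $\partial_{J,\theta}(M_L) \subset SV_K(Y,\alpha)$. Since $K < L$ strictly, every orbit of period at most $K$ has period strictly less than $L$, i.e. $SV_K(Y,\alpha) \subset S\big(\bigoplus_{L' < L} M_{L'}\big)$, which is the required containment. Assembling these facts verifies the two conditions of (\ref{eq:sullivan_filtration}) and shows $A(Y,\alpha)$ is Sullivan.

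The argument has no real analytic content — all the work is hidden in the properties of $\partial_{J,\theta}$ established in \cite{pardon2019contact}. The only point requiring care, and the one I would regard as the main (if modest) obstacle, is the passage from the $\R$-filtration to a genuine well-ordering: one must use finiteness of $\mathcal{P}(L)$ to know $\mathcal{L}$ is well-ordered, and one must use the strictness $K < L$ (rather than $K \le L$) to ensure the differential of a period-$L$ generator never involves other generators of period exactly $L$, which is precisely what the strict inequality $j < i$ in Definition \ref{def:Sullivan} demands.
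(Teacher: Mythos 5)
Your proof is correct and takes essentially the same approach as the paper, which also reads the Sullivan filtration off the action filtration using the strict action-decreasing property of $\partial_{J,\theta}$. The only (cosmetic) difference is that the paper indexes the $M_i$ by individual orbits ordered by non-decreasing period, whereas you group all orbits of a given period into a single $M_L$; both work for the same reason, and your write-up makes explicit the well-ordering and strict-triangularity checks that the paper leaves implicit.
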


\begin{proof} We can order the orbits $\Gamma_1, \Gamma_2 \dots$ so that the period of $\Gamma_i$ is less than or equal to the period of $\Gamma_{i+1}$. Then we let $V(Y,\alpha)$ be defined as above and let $M_i \subset V(Y,\alpha)$ be the subspace $\mathfrak{o}_{\Gamma_i} \otimes \Q$. Then $A(Y,\alpha) = SV(Y,\alpha)$ and the subspaces $M_i$ indexed by $I = \N$ satisfy Definition \ref{def:Sullivan}. \end{proof}

\subsection{Linearized Contact Homology} We can now prove the results from the introduction. By Theorem \ref{thm:main_pardon} and Lemma \ref{lem:contact_dga_is_Sullivan}, there is a well-defined equivalence class of Sullivan cdg-algebra
\[A(Y,\xi)\]
represented by the Sullivan cdg-algebra $A(Y,\alpha)$ for any choice of Floer data $(\alpha,J,\theta)$. By Lemma \ref{lem:weak_equivalent_Aug}, this equivalence class has a canonically associated set of augmentation classes. Moreover, by Lemma \ref{lem:linearized_homology_class}, each augmentation class has an associated linearized homology, well-defined up to non-canonical isomorphism. Thus we may make the following definitions.

\begin{definition} \label{def:aug_set_contact} The set of \emph{augmentation classes} of a closed contact manifold $(Y,\xi)$ is given by
\[
\on{Aug}(Y,\xi) = \on{Aug}(A(Y,\xi))
\]
\end{definition}

\begin{definition} \label{def:LCH} The \emph{linearized contact homology} $LCH_{[\epsilon]}(Y,\xi)$ of a closed contact manifold $(Y,\xi)$ and an augmentation class $[\epsilon] \in \on{Aug}(Y,\xi)$ is the (isomorphism class of) graded module
\[
LCH_{[\epsilon]}(Y,\xi) = LH(A(Y,\xi),[\epsilon])
\]
 \end{definition}

\noindent The well-definedness of augmentation classes and linearized contact homology groups together constitute the main theorem (Theorem \ref{thm:main_theorem}) from the introduction.

\begin{remark}[Gradings/Coefficients] \label{rmk:gradings_coeffs}There are many circumstances where the gradings and coefficients of the dg-algebra $A(Y,\xi)$ can be refined. Of particular note are the following cases.
\begin{itemize}
    \item If $H_1(Y) = 0$ then the $\Z/2$-grading can be enhanced to a $\Z/2m_\xi$-grading where
    \[m_\xi = \on{min}\{ c_1(\xi) \cdot A \; : \; A \in H_2(Y;\Z)\}\]
    \item Following \cite{eliashberg2010introduction}, one may define $A(Y,\xi)$ to have coefficients in the group ring of $H_2(Y)$ over $\Q$ graded by $|A| = 2c_1(\xi) \cdot A$ and the resulting dg-algebra has a $\Z$-grading.
\end{itemize}
In both of these cases, the coefficient group $R$ and grading $\Z/2m$ satisfy the assumptions of Section \ref{sec:homological_algebra}. Thus the appropriately modified version of Theorem \ref{thm:main_theorem} and Definitions \ref{def:aug_set_contact}-\ref{def:LCH} apply.\end{remark}

There are a number of stronger invariance results that cannot be obtained with the simple algebraic tools in Section \ref{sec:homological_algebra}. We briefly discuss these results.

\begin{remark}[Fillings] \label{rmk:fillings} Theorem \ref{thm:main_theorem} does not address the well-definedness of the linearized contact homology associated to an exact filling $W$. Morally, this group should be defined by
\[
LCH_{[\epsilon_W]}(Y,\xi) \qquad\text{where}\qquad \epsilon_W:A(Y,\xi) \to A(\emptyset) = \Q \text{ is the contact dga cobordism map}
\]
Unfortunately, the foundations of Pardon \cite{pardon2019contact} only establish that $\epsilon_W$ is well-defined up to chain homotopy, which is an relation that is not clearly related to our notion of weak equivalence. \end{remark}

\begin{remark}[Weak Functoriality] \label{rmk:weak_functoriality} A simple version of functoriality of linearized contact homology would state that an exact cobordism $X:(Y,\xi) \to (Z,\eta)$ induces maps
\[
\Phi^*_X:\on{Aug}(Z,\xi) \to \on{Aug}(Y,\eta) \qquad\text{and}\qquad LCH_X:LCH(Y,\Phi^*_X[\epsilon]) \to LCH(Z,[\epsilon])
\]
where the latter may be interpreted as being well-defined up to left and right action by automorphisms of the domain and target. Such maps can be defined for a given choice of Floer data, but they a priori depend on these choices, again due to the well-definedness of cobordism maps only up to chain homotopy of maps between contact dg-algebras. \end{remark}

\section{Model Categories And CDGAs} \label{sec:model_categories} The homological algebra developed in Section \ref{sec:homological_algebra} is best understood via the formalism of model categories. Here we provide a brief overview of model categories, to both contextualize Section \ref{sec:homological_algebra} and to recall some useful results for the applications in Section \ref{sec:example_applications}. 

\subsection{Overview} Model category theory, due to Quillen \cite{quillen2006homotopical}, is an abstract categorical framework for homotopy theory, and is closely related to more modern formalisms (e.g. $\infty$-categories \cite{lurie2009higher}). 

\vspace{3pt} Roughly, a \emph{model category} $\mathcal{C}$ is a complete and cocomplete category equipped with three distinguished classes of morphisms
\[\text{weak equivalences $\on{Equiv}(\mathcal{C})$}\qquad \text{fibrations $\on{Fib}(\mathcal{C})$} \qquad\text{cofibrations $\on{Cof}(\mathcal{C})$}\]
The weak equivalences must contain isomorphisms and satisfy the \emph{two-out-of-three} property, and the three classes satisfy several factorization axioms. See Quillen \cite{quillen2006homotopical} or Hovey \cite{hovey2007model} for a detailed definition. An object $A$ is called \emph{cofibrant} if the map from the initial object is a cofibration, and \emph{fibrant} if the map to the final object is a fibration. 

\vspace{3pt}

Model categories provide a framework for formulating homotopy categories and derived functors. There is a  natural notion of homotopy between morphisms, and the \emph{homotopy category}
\[
\on{Ho} \mathcal{C} \qquad\text{of a model category $\mathcal{C}$}
\]
is the category whose objects are the bifibrant (i.e. both fibrant and cofibrant) objects of $\mathcal{C}$ and whose morphisms are the morphisms in $\mathcal{C}$ modulo the homotopy relation. A morphism in a model category $\mathcal{C}$ is a \emph{homotopy equivalence} if it descends to an isomorphism in $\on{Ho} \mathcal{C}$. The following result is a variant of the classical Whitehead theorem (cf. \cite[Ch 2, Thm 1.10]{goerss2009simplicial}).

\begin{theorem}[Whitehead] \label{thm:model_whitehead} A morphism $A \to B$ of bifibrant objects in a model category $\mathcal{C}$ is a weak equivalence if and only if it is a homotopy equivalence.
\end{theorem}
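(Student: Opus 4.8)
The plan is to prove the Whitehead theorem (Theorem \ref{thm:model_whitehead}) in the abstract setting of a model category $\mathcal{C}$, following the classical two-sided argument. One direction is immediate from the definitions: if $f:A \to B$ is a homotopy equivalence, then by definition it descends to an isomorphism in $\on{Ho}\,\mathcal{C}$, and since the localization functor $\mathcal{C} \to \on{Ho}\,\mathcal{C}$ inverts precisely the weak equivalences (and $A$, $B$ are bifibrant so the homotopy category is computed honestly here), one checks that an isomorphism in $\on{Ho}\,\mathcal{C}$ between bifibrant objects must be represented by a genuine weak equivalence. The substantive content is therefore the converse: a weak equivalence $f:A \to B$ between bifibrant objects is a homotopy equivalence, i.e. admits a homotopy inverse.

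For the converse, the key structural input is that between bifibrant objects the left and right homotopy relations agree and are genuine equivalence relations on morphisms, and that homotopy is compatible with composition. First I would record these facts (standard consequences of the factorization and lifting axioms, using cofibrancy of the source and fibrancy of the target), so that ``$[A,B] := \on{Hom}_{\mathcal{C}}(A,B)/\!\simeq$'' assembles into a well-defined quotient category on bifibrant objects. The strategy is then to build a homotopy inverse to $f$ in two stages, each an application of the lifting axioms together with the two-out-of-three property. One convenient route is: factor or manipulate $f$ so as to reduce to the case of a weak equivalence that is simultaneously a fibration (or a cofibration) between bifibrant objects, prove the statement in that special case by a direct lifting argument against $A \to \ast$ or $\emptyset \to B$, and then splice the two cases together.

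Concretely, I would proceed as follows. Suppose first that $f$ is an acyclic fibration between bifibrant objects. Because $A$ is cofibrant, the square with top edge $\on{Id}_A$ and right edge $f$ against the cofibration $\emptyset \to A$ admits a lift $g:B \to A$ with $f g = \on{Id}_B$ on the nose; then $g$ is a weak equivalence by two-out-of-three, and one shows $gf \simeq \on{Id}_A$ by exhibiting a lift against a suitable cylinder or path object for $A$, using fibrancy of $A$. Dually, if $f$ is an acyclic cofibration between bifibrant objects, the mirror-image argument using fibrancy of $B$ produces a homotopy inverse. For a general weak equivalence $f:A \to B$, I would apply the factorization axiom to write $f = p \circ i$ with $i:A \to C$ an acyclic cofibration and $p:C \to B$ a fibration; two-out-of-three forces $p$ to be acyclic as well, and $C$ is bifibrant since it receives an acyclic cofibration from the cofibrant $A$ and fibers over the fibrant $B$. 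The two special cases then give homotopy inverses to $i$ and to $p$, whose composite is a homotopy inverse to $f$, and the compatibility of homotopy with composition (established at the start) shows this genuinely inverts $f$ in $\on{Ho}\,\mathcal{C}$.

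The main obstacle, and the step that carries the real weight, is the lifting argument that upgrades a one-sided inverse to a genuine two-sided homotopy inverse, together with the preliminary verification that on bifibrant objects the homotopy relation is an equivalence relation respected by composition. These are exactly the places where bifibrancy is essential: cofibrancy of the source is what lets us solve the lifting problem that produces a section, while fibrancy of the target (and of the intermediate object $C$) is what lets us realize the required homotopies via a path object. Everything else is bookkeeping with two-out-of-three, but I would flag that the argument only works because the factorization can be arranged to keep the intermediate object bifibrant; handling this carefully is the crux. Since this is a standard result, I would present the reduction cleanly and cite \cite[Ch 2, Thm 1.10]{goerss2009simplicial} for the detailed lifting computations rather than reproducing them in full.
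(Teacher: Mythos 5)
The paper does not actually prove Theorem \ref{thm:model_whitehead}; it states it as a standard fact and cites \cite[Ch 2, Thm 1.10]{goerss2009simplicial}. Your sketch of the direction ``weak equivalence $\Rightarrow$ homotopy equivalence'' is the standard textbook argument (factor $f = p \circ i$ into an acyclic cofibration followed by a fibration, note $p$ is acyclic by two-out-of-three and the intermediate object is bifibrant, and handle the two special cases by lifting against $\emptyset \to B$ resp.\ a cylinder object), and it is essentially correct, modulo a small slip: the section $g$ of an acyclic fibration $f:A \to B$ is produced by lifting in the square whose left edge is the cofibration $\emptyset \to B$ (so it is cofibrancy of $B$, not of $A$, that is used there), and the homotopy $gf \simeq \on{Id}_A$ comes from lifting $A \sqcup A \to \on{Cyl}(A)$ against the acyclic fibration $f$, which uses cofibrancy of $A$ rather than its fibrancy. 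Since both objects are bifibrant these hypotheses are all available, so this is cosmetic.

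The genuine gap is in the direction you dismiss as ``immediate from the definitions.'' With the paper's definition, a homotopy equivalence is a map admitting a two-sided homotopy inverse, and the claim that such a map must be a weak equivalence is \emph{not} a formal consequence of anything: your justification appeals to the statement that the localization $\mathcal{C} \to \on{Ho}\,\mathcal{C}$ inverts \emph{precisely} the weak equivalences, but that saturation property is itself a theorem whose standard proof goes through the Whitehead theorem (e.g.\ Hovey deduces it from the analogue of Proposition 1.2.8, which is exactly this statement), so the argument is circular as written. The non-circular proof of this direction again factors $f = p \circ i$ with $i$ an acyclic cofibration; by the already-proven direction and two-out-of-three for homotopy equivalences, $p$ is a fibration between bifibrant objects that is a homotopy equivalence, and one must then show directly that such a $p$ is an acyclic fibration. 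This requires a genuinely new idea: correct the homotopy inverse $g$ to a strict section $g'$ with $pg' = \on{Id}$ by lifting the homotopy $pg \simeq \on{Id}$ through the fibration $p$, and then use the homotopy $g'p \simeq \on{Id}$ to exhibit $p$ as having the right lifting property against all cofibrations (equivalently, as a retract of an acyclic fibration). None of this appears in your outline, so as it stands one half of the equivalence is unproved.
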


Later in this paper, we will briefly need to use derived functors, and specifically sequential homotopy colimits. We review the latter briefly here. Let $\mathcal{S}$ denote the ordered set $\N$ viewed as a small category. The category of functors
\[[\mathcal{S},\mathcal{C}]\qquad\text{in a model category $\mathcal{C}$}\]
consists of all sequential diagrams $A_1 \to A_2 \to \dots $ in $\mathcal{C}$. This category has a natural Reedy model structure \cite{riehl2014theory} and thus an associated homotopy category. The homotopy colimit is then a functor
\[
\on{hocolim}: \on{Ho}[\mathcal{S},\mathcal{C}] \to \on{Ho} \mathcal{C}
\]
See Riehl \cite[Ex 8.5]{riehl2014theory} for more details. Given a cofibrant diagram $F$ in $[\mathcal{S},\mathcal{C}]$, the homotopy colimit coincides with the ordinary colimit. More generally, for any diagram $F$ in $[\mathcal{S},\mathcal{C}]$, we can define the homotopy colimit by taking a cofibrant replacement $E \to F$ and taking
\[
\on{hocolim} F := \on{colim} E \in \on{Ho} \mathcal{C}
\]
In particular, there is a natural map from the homotopy colimit to the ordinary colimit. In the category of $\Z$-graded cdg-algebras over $R$, we can take the homotopy colimit to be a Sullivan dg-algebra by Lemma \ref{lem:cofibrant_replacement}.

\subsection{Model Structure On CDGA} The category of graded-commutative dg-algebras graded by $\Z$ is a classic example of a model category. Precisely, we have the following result of Hinich \cite{hinich1997homological}. 

\begin{theorem}[Hinich] \label{thm:model_structure_on_cdga} The category of $\Z$-graded commutative dg-algebras over a unital ring $R$
\[
\CDGA(R)
\]
carries a model category structure where weak equivalences are quasi-isomorphisms, fibrations are surjective maps and cofibrant objects are retracts of Sullivan dg-algebras in the sense of Definition \ref{def:Sullivan}.
\end{theorem}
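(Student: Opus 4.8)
The plan is to obtain this model structure by \emph{transfer} along the free--forgetful adjunction
\[
S : \mathrm{Ch}(R) \rightleftarrows \CDGA(R) : U,
\]
where $U$ is the forgetful functor to $\Z$-graded chain complexes and $S$ is the free graded-commutative algebra functor. The source of the transfer is the projective model structure on $\mathrm{Ch}(R)$, in which weak equivalences are quasi-isomorphisms, fibrations are degreewise surjections, and the generating cofibrations $I$ and generating acyclic cofibrations $J$ are the standard sphere-to-disk inclusions $S^{n-1} \to D^n$ and disk inclusions $0 \to D^n$. One then declares a map in $\CDGA(R)$ to be a weak equivalence or a fibration precisely when its underlying chain map is one, defines cofibrations by the left lifting property against acyclic fibrations, and invokes Kan's transfer lemma (cf. Hirschhorn or Goerss--Schemmerhorn) with generating sets $S(I)$ and $S(J)$.

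First I would verify the standing hypotheses of the transfer lemma. The category $\CDGA(R)$ is complete and cocomplete: limits and filtered colimits are created by $U$, coproducts are tensor products, and all colimits exist by the monadicity of $S \dashv U$. The smallness condition on the domains of $S(I)$ and $S(J)$ holds because the generators are compact and $U$ preserves filtered colimits. With these in hand, the only substantive axiom to check is the \emph{acyclicity} condition: every relative $S(J)$-cell complex must be a quasi-isomorphism.

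The acyclicity condition is the main obstacle, and it is exactly the place where the characteristic-zero hypothesis enters. It reduces to showing that a single cell attachment $A \to A \otimes_R S(D^n)$ along a free acyclic generator is a quasi-isomorphism, which in turn reduces to the claim that $S(D^n) \simeq R$. Here $S(D^n)$ is the free graded-commutative algebra on a generator $x$ with $dx$ the paired generator, and its contractibility onto $R$ is a Koszul-type computation in which the integer coefficients $k$ appearing in $d(x^k)$ must be inverted --- precisely the division by $k$ used in the proof of Lemma \ref{lem:main_homological} and the source of the characteristic-zero remark in Section \ref{sec:homological_algebra}. Since each such attachment is a quasi-isomorphism (using that $S(D^n)$ is a bounded-below complex of free $R$-modules, so the Künneth map causes no trouble), and quasi-isomorphisms of this cofibrant type are closed under pushout and transfinite composition, every relative $S(J)$-cell complex is a weak equivalence.

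Finally I would identify the cofibrant objects to match the statement. The transferred generating cofibrations $S(S^{n-1} \to D^n)$ attach, starting from the initial object $R$, a single free generator at a time whose differential is constrained to lie in the subalgebra generated by the previously attached generators. Running through a relative $S(I)$-cell complex with domain $R$ therefore produces exactly a free graded-commutative algebra $SV$ whose generators are indexed by a well-ordered set and satisfy the Sullivan filtration condition (\ref{eq:sullivan_filtration}) of Definition \ref{def:Sullivan}. Hence the cofibrant objects are retracts of Sullivan dg-algebras, while fibrations are the surjections and weak equivalences the quasi-isomorphisms by the very construction of the transferred structure, completing the identification.
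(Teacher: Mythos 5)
Your proposal is correct, but it takes a genuinely different route from the paper: the paper's proof of Theorem \ref{thm:model_structure_on_cdga} is essentially a citation, reading off the weak equivalences and fibrations from Hinich \cite[\S 2.2, Thm 2.2.1]{hinich1997homological} applied to the forgetful functor $\CDGA(R) \to {\bf Ch}(R)$ and identifying cofibrant objects with retracts of Hinich's standard cofibrations $R \to A$, which are Sullivan algebras by construction. You instead re-derive the theorem by executing the transfer along $S \dashv U$ yourself; since that is precisely how Hinich proves his Theorem 2.2.1 (specialized to the commutative operad), the two arguments coincide at the mathematical level, and what your version buys is a self-contained account that makes visible exactly where the hypothesis $\Q \subset R$ enters --- the acyclicity of $S(D^n)$ in positive word length, which for an even-degree generator $x$ requires inverting the integers $k$ appearing in $d(x^k) = k\,x^{k-1}\,dx$ (for an odd-degree generator no division is needed). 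Two points worth making explicit if you write this up: first, the theorem as literally stated says ``unital ring $R$,'' but your acyclicity step correctly shows that the standing assumption of Section \ref{sec:homological_algebra} that $R$ is a $\Q$-algebra is genuinely needed, so the statement should be read with that hypothesis; second, your identification of cofibrant objects only argues that relative $S(I)$-cell complexes over $R$ are Sullivan, and you should also record the converse --- that every Sullivan algebra in the sense of Definition \ref{def:Sullivan} is a transfinite $S(I)$-cell complex, since the condition $dM_i \subset S\bigl(\bigoplus_{j<i} M_j\bigr)$ together with $d^2 = 0$ says exactly that each generator is attached along a cycle of the preceding stage --- so that ``retract of Sullivan'' and ``cofibrant'' actually coincide rather than one class merely containing the other.
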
 

\begin{proof} The description of weak equivalences and fibrations is directly from \cite[\S 2.2]{hinich1997homological} and \cite[Thm 2.2.1]{hinich1997homological} applied to the forgetful functor $\CDGA(R) \to {\bf Ch}(R)$ to the category of $\Z$-graded chain complexes. Moreover, cofibrations are retracts of standard cofibrations $R \to A$ by \cite[Rmk 2.2.5]{hinich1997homological}. A standard cofibration $R \to A$ is precisely a Sullivan cdg-algebra by construction \cite[\S 2.2.3]{hinich1997homological}.\end{proof}

\begin{remark} The analogue of Theorem \ref{thm:model_structure_on_cdga} holds for the category $\CDGA_\star(R)$ of pointed cdg-algebras.
\end{remark}

\noindent In light of Theorems \ref{thm:model_whitehead} and \ref{thm:model_structure_on_cdga}, the main homological lemma (Lemma \ref{lem:main_homological}) is simply a version of Whitehead's Theorem in our setting. While this can be turned into a proof in the $\Z$-graded setting, for the general case we have opted to provide an elementary proof following Allday \cite{allday1978rational}.

\vspace{3pt}

In Section \ref{sec:example_applications}, we will need several further results about the model category of cdg-algebras. We start by introducing the following terminology.

\begin{definition} A cdg-algebra $A$ graded by $\Z$ is \emph{$k$-positively generated} for $k \ge 0$ if $A$ is generated by elements of grading larger than $k$ as an unital algebra. A $0$-positively generated pointed cdg-algebra will simply be called \emph{positively generated}.
\end{definition}

The following lemma is a strong variant of the usual factorization axiom in a model category.

\begin{lemma}[Sullivan Factorization] \label{lem:sullivan_factorization} Let $A$ and $B$ be pointed cdg-algebras with $A = SU$ Sullivan. Then any pointed cdg-algebra map $\Phi:A \to B$ factorizes as a composition of
\[
\text{a cofibration }A = SU \to SV \qquad\text{and}\qquad \text{a fibration and weak equivalence }SV \to B
\]
where $SV$ is Sullivan, and $SU \to SV$ is induced by an inclusion $U \to V$ Moreover, if $A$ and $B$ are $k$-positively generated then we can choose $SV$ to be $k$-positively generated.\end{lemma}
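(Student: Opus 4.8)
The plan is to build the factorization $A = SU \to SV \to B$ by a standard "attaching cells to kill homology" argument, performed in a transfinite induction that simultaneously maintains the Sullivan condition and the positivity condition. The target $SV$ will be constructed as a union of an increasing sequence of Sullivan sub-algebras $SV^{(0)} = SU \subset SV^{(1)} \subset \dots$, each obtained from the previous by freely adjoining generators, together with a compatible sequence of maps $SV^{(n)} \to B$ refining $\Phi$. At each stage I adjoin two types of generators: \emph{surjectivity generators}, to make the map $SV^{(n)} \to B$ surjective on homology, and \emph{injectivity generators}, to kill the kernel on homology. The colimit map $SV \to B$ will then be a surjection (hence a fibration in the sense of Theorem \ref{thm:model_structure_on_cdga}) and a quasi-isomorphism, while the inclusion $SU \to SV$ is manifestly a cofibration induced by an inclusion of generating modules $U \to V$.

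First I would arrange surjectivity of the underlying algebra map, since a fibration in $\CDGA(R)$ is exactly a surjection. For every element $b \in B$ not in the image, I adjoin a generator $u_b$ in the same degree with $d u_b = 0$ and define the extended map by $u_b \mapsto b$; the Sullivan filtration is extended by placing these generators above everything constructed so far, and since $d u_b = 0$ the condition (\ref{eq:sullivan_filtration}) holds trivially. To handle homology, I iterate: given the current $SV^{(n)} \xrightarrow{f} B$, for each homology class in $B$ not hit by $f_*$ I adjoin a closed generator mapping to a representing cycle, and for each class in $\ker(f_*)$, represented by a cycle $z \in SV^{(n)}$ with $f(z) = db$, I adjoin a generator $w$ with $dw = z$ and $w \mapsto b$. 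The new generator has its differential landing in the previously constructed sub-algebra, so the Sullivan property is preserved, and by construction the offending class dies in $SV^{(n+1)}$ while no new kernel is introduced below the current word-length stratum.

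The positivity bookkeeping is where care is required. When $A$ and $B$ are $k$-positively generated, a surjectivity generator $u_b$ or a homology-generator mapping to a cycle can be taken in the degree of $b$, which is $> k$; the subtle case is an injectivity generator $w$ with $dw = z$, which sits one degree \emph{below} the cycle $z$ it bounds. I would argue that since $z$ is a cycle in the kernel lying in the subalgebra generated in degrees $> k$, and since in the $k$-positively generated setting the relevant cycles can be taken of degree $> k+1$ (using that products of positive-degree generators raise degree), the new generator $w$ still has degree $> k$. Concretely, I would verify that the lowest-degree generators are never the source of kernel classes requiring a bounding generator, so the adjoined $w$ always lands in degree $> k$; this is the technical heart of the positivity claim and the step I expect to be the main obstacle.

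The main difficulty, then, is not the existence of the factorization, which is a routine small-object-style construction, but rather \textbf{controlling the grading of the injectivity generators} so that positivity is preserved, and simultaneously ensuring that the transfinite process terminates (or that its colimit is genuinely a quasi-isomorphism rather than merely surjective on homology). For termination and exactness I would use the completeness of the word-length and action-type filtrations together with the observation that homology in each finite stratum stabilizes after finitely many stages, so that the colimit $SV = \bigcup_n SV^{(n)}$ satisfies $H(SV) \to H(B)$ being an isomorphism by a direct-limit-of-homology argument. The resulting $SV \to B$ is then both a fibration and a weak equivalence, and $SU \to SV$ is a Sullivan cofibration of the required form, completing the factorization.
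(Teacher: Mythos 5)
Your overall strategy is the same as the paper's (an iterated cell-attachment argument in the style of Hinich, taking a colimit of Sullivan extensions $SV^{(n)}$ mapping compatibly to $B$), but there is a concrete error in the first step. To make $SV^{(1)} \to B$ surjective you adjoin, for each $b \in B$ not in the image, a generator $u_b$ with $du_b = 0$ and $u_b \mapsto b$. This does not define a chain map unless $db = 0$: one would need $f(du_b) = 0$ to equal $d(f(u_b)) = db$. Since you must surject onto all of $B$, not just onto cycles, the correct move is to adjoin a \emph{contractible pair} of generators $x_b, y_b$ with $dx_b = y_b$, $x_b \mapsto b$ and $y_b \mapsto db$ (this is what the paper does, together with closed generators $w_z \mapsto z$ for cycles $z$ to guarantee surjectivity on homology at the first stage). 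The rest of your iteration (adjoining $w$ with $dw = z$, $w \mapsto b$ whenever $f(z) = db$, and passing to the colimit using that homology commutes with filtered colimits) matches the paper's proof.

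The second problem is the one you yourself flag: the degree of the injectivity generators. As written, your bounding generator $w$ sits one degree below the cycle $z$ it kills, and the only a priori bound is $|z| \ge k+1$ (a nonzero element of the augmentation ideal of a $k$-positively generated algebra has degree at least $k+1$), which gives $|w| \ge k$, not $|w| > k$. Your proposed fix --- "the relevant cycles can be taken of degree $> k+1$" --- is exactly the assertion that needs proof, and it is not automatic: a closed generator of $A$ in the lowest allowed degree whose image in $B$ vanishes would be a cycle of degree exactly $k+1$ lying in the kernel on homology, and killing it would force a generator of degree $k$. The resolution has to come from the grading conventions relevant to the application (for the contact dg-algebra the differential \emph{lowers} the grading, so the bounding generator sits one degree \emph{above} $z$ and is harmless, while the potentially problematic generator becomes $y_b = dx_b$ of degree $|b|-1$, which can only be an issue when $db$ would have degree $\le k$, in which case $db = 0$ in $B_\star$ and no such generator is needed). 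Without pinning down this degree bookkeeping the "moreover" clause of Lemma \ref{lem:sullivan_factorization} is not established, so the proposal as it stands has a genuine gap precisely at the step it identifies as the technical heart.
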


\begin{proof} We adapt an argument of Hinich \cite{hinich1997homological}. Specifically, we inductively constructing a sequence of Sullivan dg-algebras $SV_i$ where $V_{i+1} = V_i \oplus M_i$ and $V_0 = U$ along with a sequence of maps $\Phi_i:SV_i \to B$ such that the following diagram commutes.
\[
\begin{tikzcd}
SU \arrow{r}{\subset} \arrow{d}{\Phi} & SV_1 \arrow{r}{\subset} \arrow{d}{\Phi_1} & SV_2 \arrow{r}{\subset} \arrow{d}{\Phi_2} & \dots \\
B \arrow{r}{=} & B \arrow{r}{=} & B \arrow{r}{=} & \dots 
\end{tikzcd}
\]
Given a set $S$, we denote the free $R$-module generated by $S$ by $FS$. For the base case, we let $M_1$ be the free $R$-module with two generators $x_b$ and $y_b$ for each $b$ in the kernel $B_\star$ of the augmentation of $B$ and a generator $w_z$ for each element $z$ in the set of cycles $Z_\star \subset B_\star$. That is
\[
M_1 = FB_\star \oplus FB_\star \oplus FZ_\star
\]
We define the gradings by $|x_b| = |y_b| - 1 = |b|$ and $|w_z| = |z|$ and we extend the augmentation uniquely so that $M_0$ is in the kernel. We extend the differential on $SU$ to $SV_1 = S(U \oplus M_0)$ by setting $dx_b = y_b$ and $dw_z = 0$. We extend the map $\Phi$ to $\Phi_1$ by taking
\[\Phi_1(x_b) = b \qquad \Phi_1(y_b) = db \qquad \Phi_1(w_z) = z\]
For the induction step, let $Z_i \subset SV_i$ denote the set of cycles in $SV_i$ that are in the kernel of the augmentation and let $P_i$ denote the set of pairs $p = (z,b) \in Z_i \times B_\star$ where $\Phi_i(z) = db$. Set
\[
M_i = FP_i \qquad\text{with a generator $x_p$ for each $p \in P_i$}
\]
We define $|x_p| = |z|$ and take $M_i$ to be in the kernel of the extended augmentation. We also extend the differential to $SV_{i+1}$ by taking $dx_p = z \in SV_i$ and extend $\Phi_i$ to $\Phi_{i+1}$ by setting $\Phi_{i+1}(x_p) = b$.

\vspace{3pt}

Now note that each map $\Phi_i:SV_i \to B$ is surjective and maps cycles surjectively onto cycles. Moreover, if $z$ is a cycle in $SV_i$ such that $\Phi_i(z) \in B$ is a boundary, then $z$ is mapped to a boundary under the map $SV_i \to SV_{i+1}$. Thus the colimit
\[
\on{colim} \Phi_i :\on{colim} SV_i \to B
\]
is a surjective quasi-isomorphism, i.e. a fibration and a weak equivalence. It is simple to see that the colimit $SV$ is Sullivan and generated by $V = \on{colim} V_i$ and $SU \to SV$ is induced by the inclusion $U \subset V$. This gives the desired factorization. 

\vspace{3pt}

Finally, suppose that $A$ and $B$ are $k$-positively generated. It is straightforward to check that the generators of $M_1$ have grading larger than $k$, so that $SV_1$ is $k$-positively generated. Similarly, the generators $M_i$ introduced at each stage of the induction have grading larger than $k$ if $SV_i$ is $k$-positively generated. Thus each $SV_i$ and the colimit $SV$ are $k$-positively generated. \end{proof}

\begin{remark} \label{rmk:unpointed_factorization} Lemma \ref{lem:sullivan_factorization} also holds for maps $A \to B$ of (unpointed) cdg-algebras (without the final fact about $k$-positively generation). This is basically the original statement of Hinich \cite[\S 2.2.4]{hinich1997homological}. \end{remark}

\begin{lemma}[Sullivan Replacement] \label{lem:cofibrant_replacement} Any cdg-algebra $B$ has an acyclic fibration (i.e. surjective quasi-isomorphism) $A \to B$ from a Sullivan cdg-algebra $A$. Moreover, $SV$ is unique up to quasi-isomorphism. 
\end{lemma}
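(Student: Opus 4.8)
The plan is to read off existence directly from the Sullivan Factorization lemma and to obtain uniqueness from the standard lifting machinery of the model structure on $\CDGA(R)$ supplied by Theorem \ref{thm:model_structure_on_cdga}.

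For existence, I would apply Lemma \ref{lem:sullivan_factorization}, in its unpointed form from Remark \ref{rmk:unpointed_factorization}, to the unit map $R \to B$ out of the initial object $R = S(0)$, the free algebra on the zero module. Since $S(0)$ is trivially Sullivan --- the filtration condition (\ref{eq:sullivan_filtration}) being vacuous over an empty index set --- the lemma factorizes $R \to B$ as a cofibration $R \to SV$ followed by a fibration and weak equivalence $SV \to B$, with $SV$ Sullivan. The second map is exactly a surjective quasi-isomorphism, i.e. the desired acyclic fibration from a Sullivan cdg-algebra.

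For uniqueness, suppose $p\colon A \to B$ and $p'\colon A' \to B$ are two acyclic fibrations with $A = SV$ and $A' = SV'$ Sullivan. Sullivan algebras are cofibrant by Theorem \ref{thm:model_structure_on_cdga}, so the structure map $R \to A$ is a cofibration. I would then consider the square whose left edge is this cofibration $R \to A$, whose top edge is the unit $R \to A'$, whose bottom edge is $p\colon A \to B$, and whose right edge is the acyclic fibration $p'\colon A' \to B$; this square commutes because both ways around agree with the unique unit map $R \to B$. The left lifting property of cofibrations against acyclic fibrations then yields a map $\ell\colon A \to A'$ with $p' \circ \ell = p$. Since $p$ and $p'$ are weak equivalences, the two-out-of-three axiom forces $\ell$ to be a weak equivalence, whence $A$ and $A'$ are quasi-isomorphic.

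I expect no serious obstacle: existence is an immediate specialization of the factorization already proven, and the uniqueness argument is the textbook proof that cofibrant replacements are unique up to weak equivalence. The only points demanding care are invoking the \emph{unpointed} factorization of Remark \ref{rmk:unpointed_factorization} (rather than the pointed Lemma \ref{lem:sullivan_factorization}, since $B$ carries no augmentation here) and checking that the two edges emanating from $R$ genuinely commute into $B$ so that the lifting axiom applies.
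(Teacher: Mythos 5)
Your proposal is correct and matches the paper's proof essentially verbatim: existence via the unpointed Sullivan factorization of Remark \ref{rmk:unpointed_factorization} applied to $R \to B$, and uniqueness via the lifting axiom against the square over $B$ followed by two-out-of-three. The only (cosmetic) difference is which of the two replacements you lift out of, and your version correctly calls the second factor an acyclic fibration where the paper's prose has a small slip.
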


\begin{proof} Simply apply the factorization in Lemma \ref{lem:sullivan_factorization} for cdg-algebras without augmentations (Remark \ref{rmk:unpointed_factorization}) to the map $R \to B$. This yields an acyclic cofibration $A \to B$ from a Sullivan cdg-algebra. If $A$ and $A'$ are two such Sullivan cdg-algebras, then there is a diagram
\[
\begin{tikzcd}
R \arrow[r] \arrow[d] & A \arrow[d,"\simeq"] \\
A'  \arrow[r,"\simeq"] & B
\end{tikzcd}
\]
Since the left arrow is a cofibration, and the right arrow is a fibration and a weak equivalence, there is a map $A' \to A$ making the diagram commute by the lifting axiom \cite[Def 2.1]{hinich1997homological}. This map must necessarily be a weak equivalence. 
\end{proof}

\begin{lemma}[Positively Generated Hocolim] \label{lem:connected_hocolim} Let $A_1 \to A_2 \to \dots$ be a sequence of $\Z$-graded Sullivan cdg-algebras $A_i$ that admit an augmentation and are each $k$-positively generated for $k \ge 0$. Then
\[
\on{hocolim} \; A_i \qquad\text{is quasi-isomorphic to a $k$-positively generated Sullivan cdg-algebra}
\]
\end{lemma}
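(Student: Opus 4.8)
The plan is to compute $\on{hocolim}_i A_i$ by means of an explicit Reedy-cofibrant replacement of the sequential diagram, and then to identify the ordinary colimit of this replacement with a $k$-positively generated Sullivan algebra. Recall that for the direct indexing category $\mathcal{S} = \N$, a diagram $C_1 \to C_2 \to \dots$ is Reedy cofibrant precisely when the latching maps $R \to C_1$ and $C_i \to C_{i+1}$ are cofibrations, i.e. when $C_1$ is cofibrant and each structure map is a cofibration; for such a diagram the homotopy colimit coincides with the ordinary colimit. Thus it suffices to build a levelwise weak equivalence $\{\phi_i : C_i \to A_i\}$ from a tower $C_1 \to C_2 \to \dots$ of this form, with each $C_i$ a $k$-positively generated Sullivan algebra and each $C_i \to C_{i+1}$ induced by an inclusion of generating modules, and then to show $\on{colim}_i C_i$ is $k$-positively generated Sullivan.

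Before constructing the tower I would observe that the hypotheses place us effectively in the pointed category. Since $k \ge 0$, every generator of each $A_i$ sits in degree $\ge k+1 \ge 1$, so $A_i^0 = R$ and there is a canonical augmentation projecting onto the degree-zero part; with respect to these canonical augmentations, every degree-preserving unital algebra map is automatically pointed (it preserves positive degree and fixes $R \cdot 1$). This is exactly what lets me invoke the $k$-positively generated refinement of the Sullivan Factorization Lemma (Lemma \ref{lem:sullivan_factorization}), which is stated in the pointed setting.

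I would then build the tower by induction. Set $C_1 = A_1$ with $\phi_1 = \on{Id}$, which is $k$-positively generated Sullivan by hypothesis. Given $\phi_i : C_i \to A_i$ with $C_i$ a $k$-positively generated Sullivan algebra, apply Lemma \ref{lem:sullivan_factorization} to the composite $C_i \xrightarrow{\phi_i} A_i \to A_{i+1}$ to factor it as a cofibration $C_i \to C_{i+1}$ induced by an inclusion of generators, followed by a fibration and weak equivalence $\phi_{i+1} : C_{i+1} \to A_{i+1}$, with $C_{i+1}$ again $k$-positively generated Sullivan. The resulting commuting ladder exhibits $\{\phi_i\}$ as a levelwise weak equivalence from a Reedy-cofibrant tower, so $\on{hocolim}_i A_i = \on{colim}_i C_i$ in $\on{Ho}\,\CDGA(R)$.

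It remains to analyze $C := \on{colim}_i C_i$. Writing $C_i = SW_i$ with $W_1 \subset W_2 \subset \dots$ nested (the inclusions coming from the factorizations), the free graded-commutative algebra functor, being a left adjoint, commutes with the directed colimit, so $C = S(\bigcup_i W_i) = SW$. That $SW$ is $k$-positively generated is immediate since $W = \bigcup_i W_i$ is concentrated in degrees $> k$. The step requiring the most care — and the technical heart of the argument — is verifying that $SW$ is Sullivan: I would assemble its well-ordered filtration by concatenating the well-ordered filtration of $W_1$ with the (well-ordered) filtrations of the successive new-generator blocks $W_{i+1}/W_i$ supplied by each factorization. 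Because each such block has differential landing in $SW_i$, that is, in the subalgebra generated by all earlier generators, the Sullivan condition (\ref{eq:sullivan_filtration}) holds for the concatenated order, which is a countable ordinal sum and hence well-ordered. This produces $SW$ as the desired $k$-positively generated Sullivan representative of $\on{hocolim}_i A_i$.
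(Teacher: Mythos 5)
Your proposal is correct and follows essentially the same route as the paper: use the grading to see that the $A_i$ carry canonical augmentations making all maps pointed, inductively apply the Sullivan Factorization Lemma (Lemma \ref{lem:sullivan_factorization}) to replace the tower by one of cofibrations between $k$-positively generated Sullivan algebras with levelwise weak equivalences to the $A_i$, and identify the ordinary colimit of that tower with the homotopy colimit. Your added detail on Reedy cofibrancy and on the concatenated well-ordering making $SW$ Sullivan simply spells out steps the paper leaves implicit.
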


\begin{proof} Write $A_i = SV_i$ where $V_i$ is generated by elements of degree larger than $k$. There is a unique graded algebra map $\epsilon_i:SV_i \to R$ (determined by the fact that $V_i$ maps to zero) and this map must be the augmentation of $SV_i$. Moreover, the maps $SV_i \to SV_{i+1}$ must be pointed with respect to $\epsilon_i$ and $\epsilon_{i+1}$ by grading considerations. Thus we may view the sequence
\[
A_1 \to A_2 \to A_3 \to \dots
\]
canonically as a diagram of pointed cdg-algebra. We now construct a commutative diagram
\begin{equation}
\begin{tikzcd}
SU_1 \arrow{r}{} \arrow{d}{\simeq} & SU_2 \arrow{r}{} \arrow{d}{\simeq} & SU_3 \arrow{r}{} \arrow{d}{\simeq} & \dots \\
A_1 \arrow{r}{} & A_2 \arrow{r}{} & A_3 \arrow{r}{} & \dots 
\end{tikzcd}
\end{equation}
where $SU_i$ is Sullivan and $k$-positively generated, and $SU_i \to SU_{i+1}$ is a cofibration induced by an inclusion $U_i \to U_{i+1}$. Then $SU = \on{colim} SU_i$ is $k$-positively generated Sullivan and this is the homotopy colimit by Riehl \cite[Ex 8.5]{riehl2014theory}. 

\vspace{3pt}

We construct the diagram by induction on $i$. For the base case, take $U_1 = V_1$ and $SU_1 \to SV_1 = A_1$ to be the identity. For the induction step, suppose that $SU_i$ and the maps $SU_{j-1} \to SU_j$ and $SU_j \to A_j$ have been defined up to $j = i$. By Lemma \ref{lem:sullivan_factorization}, there is a factorization of the composition map $SU_i \to A_i \to A_{i+1}$ as
\[
SU_i \to SU_{i+1} \to A_{i+1}
\]
where the map $SU_i \to SU_{i+1}$ is given by inclusion $U_i \to U_{i+1}$ and $SU_{i+1} \to A_{i+1}$ is a quasi-isomorphism. Moreover, by induction $SU_i$ is $k$-positively generated and $A_{i+1}$ is also, by assumption. Thus $SU_{i+1}$ is $k$-positively generated. \end{proof}

\section{Uniqueness Of Augmentations} \label{sec:example_applications}

Several properties and applications of linearized contact homology are accessible using Theorem \ref{thm:main_theorem} and the algebraic results of Section \ref{sec:model_categories}, despite the shortcomings noted in Remarks \ref{rmk:fillings}-\ref{rmk:weak_functoriality}. 

\vspace{3pt}

As a simple example, we discuss the uniqueness of augmentations and linearized contact homology for strongly asymptotically dynamically convex (SADC) contact forms in the sense of Zhou \cite{zhou2021symplectic,zhou2022symplectic}. These statements are known folklore that have partially motivated various rigidity statements about symplectic homology and fillings (cf. \cite[Rmk 1.7]{zhou2021symplectic} or \cite[p. 4]{zhou2022symplectic}). Here we give rigorous formulations and proofs of these statements.

\subsection{ADNH Contact Manifolds} We start by discussing a class of contact manifolds (including SADC manifolds) whose contact dg-algebra admits an enhanced grading.

\begin{definition} An \emph{admissible pair} $(\alpha,L)$ for a contact manifold $(Y,\xi)$ is a pair of a contact form $\alpha$ and an action $L > 0$ such that every Reeb orbit $\Gamma$ of action less than or equal to $L$ is non-degenerate.
\end{definition}

Given an admissible pair $(\alpha,L)$ on $(Y,\xi)$, we may define a version of the contact dg-algebra generated only by Reeb orbits of $\alpha$ with action $L$ or less.
\[
A^L(Y,\alpha) = SV_L(Y,\alpha) \qquad\text{ as in Section \ref{sec:contact_homology}}
\]
The cobordism maps of Pardon \cite{pardon2019contact} then restrict to dg-algebra maps of the following form.
\[
A^L(Y,\alpha) \to A^K(Y,\beta) \qquad\text{if}\qquad \alpha/L > \beta/K
\]
Here these dg-algebra maps depend on a choice of cobordism Floer data (cf. \cite[\S 1.2]{pardon2019contact}) but any two such maps are chain homotopic. Note that the set of admissible pairs forms a directed poset under the ordering above.  
\begin{lemma} \label{lem:colimit_admissible_CH}The contact homology of $(Y,\xi)$ is given by the following colimit over admissible pairs.
\[
CH(Y,\xi) = \underset{(\alpha,L)}{\on{colim}} \; H(A^L(Y,\alpha))
\]
\end{lemma}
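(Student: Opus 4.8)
The plan is to reduce the colimit over the poset of admissible pairs to a colimit over the action filtration of a single non-degenerate contact form, where the statement becomes the standard fact that homology commutes with directed colimits of chain complexes. First I would fix a non-degenerate contact form $\alpha_0$ on $(Y,\xi)$ together with auxiliary Floer data, so that $H(A(Y,\alpha_0)) = CH(Y,\xi)$ by Theorem \ref{thm:main_pardon}. Since every closed Reeb orbit of $\alpha_0$ is non-degenerate, the pair $(\alpha_0,L)$ is admissible for every $L > 0$, and the action filtration of Section \ref{sec:contact_homology} exhibits $A(Y,\alpha_0)$ as the directed union $\on{colim}_L A^L(Y,\alpha_0)$ of its filtered subalgebras, by completeness and exhaustiveness of the filtration.

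Next I would invoke the exactness of filtered colimits of $R$-modules: since homology commutes with directed colimits,
\[
CH(Y,\xi) = H\big(\on{colim}_L A^L(Y,\alpha_0)\big) = \on{colim}_L H(A^L(Y,\alpha_0)),
\]
where the structure maps on the right are induced by the inclusions $A^L(Y,\alpha_0) \subset A^{L'}(Y,\alpha_0)$ for $L \le L'$. This already computes $CH(Y,\xi)$ as a colimit over the chain $\{(\alpha_0,L)\}_L$ sitting inside the poset of admissible pairs.

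The remaining work is to identify this single-form colimit with the colimit over all admissible pairs in the statement, via a cofinality argument. Given an arbitrary admissible pair $(\beta,K)$, the normalized form $\alpha_0/L \to 0$ uniformly as $L \to \infty$ on the compact manifold $Y$, so $\beta/K > \alpha_0/L$ pointwise for $L$ sufficiently large; hence there is a cobordism map $A^K(Y,\beta) \to A^L(Y,\alpha_0)$ and $(\alpha_0,L)$ dominates $(\beta,K)$ in the ordering. Thus the chain $\{(\alpha_0,L)\}_L$ is cofinal, and the colimit over admissible pairs agrees with the single-form colimit above, provided the assignment $(\alpha,L) \mapsto H(A^L(Y,\alpha))$ is genuinely functorial on the poset.

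I expect this functoriality to be the main obstacle. The maps $A^L(Y,\alpha) \to A^K(Y,\beta)$ are cobordism maps, which are only well-defined up to chain homotopy, and composites of cobordism maps agree with the cobordism map of the composite cobordism only up to chain homotopy. Passing to homology kills these ambiguities, so $(\alpha,L) \mapsto H(A^L(Y,\alpha))$ with the induced maps does define a functor from the poset; this is precisely the functoriality of truncated contact homology under exact cobordisms established by Pardon \cite{pardon2019contact}. Finally, restricting to $\alpha_0$, the cobordism map associated to the trivial product cobordism between action windows is chain homotopic to, and hence induces on homology, the inclusion-induced map used above. Invoking this last compatibility completes the identification, so that the only geometric inputs are Pardon's functoriality and the chain-homotopy uniqueness of cobordism maps, with everything else reducing to the elementary colimit bookkeeping sketched here.
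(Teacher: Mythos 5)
Your proposal is correct and follows essentially the same route as the paper's proof: fix a single non-degenerate form, use that homology commutes with the filtered colimit of the action filtration, and conclude by cofinality of the pairs $(\alpha_0,L)$ in the poset of admissible pairs. The paper's version is terser and leaves the cofinality and functoriality checks implicit, whereas you spell them out; no substantive difference.
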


\begin{proof} Fix a non-degenerate contact form $\alpha$ and a sequence $L_i$ of actions with $L_i \to \infty$. Note that $A(Y,\alpha)$ is the colimit of $A^{L_i}(Y,\alpha)$ in the category of chain complexes, where homology commutes with filtered colimits. Thus we see that
\[\on{colim}_i \; H(A^{L_i}(Y,\alpha)) = H\Big(\on{colim}_i \; A^{L_i}(Y,\alpha)\Big) = H(A(Y,\alpha)) = CH(Y,\xi)\]
The lemma follows since $(\alpha,L_i)$ is cofinal in the set of admissible pairs.\end{proof}

\begin{definition} \label{def:ADNH} A closed contact manifold $(Y,\xi)$ is \emph{asymptotically dynamically null-homologous (ADNH)} if there is a cofinal sequence $(\alpha_i,L_i)$ of admissible pairs such that
\[[\Gamma] = 0 \in H_1(Y)\]
for every closed orbit $\Gamma$ of $\alpha_i$ with period less than or equal to $L_i$.
\end{definition}

\begin{lemma} \label{lem:ADNH_homology} Let $(Y,\xi)$ be an ADNH contact manifold. Then the inclusion
\[
A_0(Y,\xi) \to A(Y,\xi) \qquad\text{is a quasi-isomorphism}
\]
\end{lemma}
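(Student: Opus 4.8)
The plan is to reduce the statement to showing that the full contact homology is concentrated in the trivial $H_1(Y)$-degree, and then to extract this concentration from the ADNH condition via the admissible-pair colimit of Lemma \ref{lem:colimit_admissible_CH}.

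First I would exploit the dg-algebra grading of $A(Y,\alpha)$ by $H_1(Y)$. Since the differential $\partial_{J,\theta}$ preserves this grading, each $A_Z(Y,\alpha)$ is a subcomplex, and homology commutes with the direct sum, so $H(A(Y,\alpha)) = \bigoplus_{Z} H(A_Z(Y,\alpha))$. The inclusion $A_0(Y,\alpha) \hookrightarrow A(Y,\alpha)$ induces precisely the inclusion of the $Z = 0$ summand. Consequently, the inclusion is a quasi-isomorphism if and only if $H(A_Z(Y,\xi)) = 0$ for every $Z \neq 0$, and this vanishing is the reformulation I would aim to establish.

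Second I would upgrade Lemma \ref{lem:colimit_admissible_CH} to a graded statement. The truncated algebras $A^L(Y,\alpha)$ inherit the $H_1(Y)$-grading, the cobordism maps respect it, and both homology and filtered colimits commute with the (possibly infinite) direct sum over $H_1(Y)$. Hence the colimit of Lemma \ref{lem:colimit_admissible_CH} can be computed one degree at a time:
\[ CH_Z(Y,\xi) = \on{colim}_{(\alpha,L)} \; H(A^L_Z(Y,\alpha)). \]
Because this colimit runs over a directed poset, I may evaluate it along any cofinal sequence, and here I would use the ADNH cofinal sequence $(\alpha_i,L_i)$ from Definition \ref{def:ADNH}. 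Then I would invoke the ADNH condition itself: for each $i$, every Reeb orbit of $\alpha_i$ with period at most $L_i$ is null-homologous, so every monomial generator of $A^{L_i}(Y,\alpha_i)$ lies in the $Z=0$ component. Thus $A^{L_i}_Z(Y,\alpha_i) = 0$ whenever $Z \neq 0$, every term in the cofinal colimit vanishes, and $CH_Z(Y,\xi) = 0$ for $Z \neq 0$. Combined with the reformulation of the first step, this proves the lemma.

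The main obstacle I anticipate is the second step: carefully justifying the graded refinement of Lemma \ref{lem:colimit_admissible_CH}. This requires that the cobordism maps (which are only well defined up to chain homotopy) genuinely respect the $H_1(Y)$-grading at the level of the directed system, so that the system splits as a direct sum of subsystems indexed by $Z$, and that passing to a cofinal sequence of admissible pairs with \emph{varying} contact forms $\alpha_i$ is legitimate. Everything else is formal once this compatibility is in place.
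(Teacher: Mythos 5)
Your proposal is correct and follows essentially the same route as the paper: the paper's (two-sentence) proof likewise writes the inclusion on homology as the colimit over admissible pairs of the inclusions $H(A^L_0(Y,\alpha)) \to H(A^L(Y,\alpha))$ and observes that these are isomorphisms along the ADNH cofinal sequence, which is exactly your vanishing of the $Z \neq 0$ graded pieces restated. The compatibility you flag as the main obstacle is already supplied by the paper's setup, which asserts that the cobordism maps respect the $H_1(Y)$-grading.
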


\begin{proof} The inclusion map $CH_0(Y,\xi) \to CH(Y,\xi)$ is simply a colimit over all admissible pairs $(\alpha,L)$ of the following inclusion maps
\[H(A^L_0(Y,\alpha)) \to H(A^L(Y,\alpha)) \qquad\text{where}\qquad A^L_0(Y,\alpha) = A^L(Y,\alpha) \cap A_0(Y,\alpha)\]
If $(Y,\xi)$ is ADNH, then these maps are isomorphisms for a cofinal sequence of pairs.
\end{proof}

\begin{lemma} \label{lem:ADNH_lift_to_Z} Let $(Y,\xi)$ be an ADNH contact manifold with $c_1(\xi) = 0$. Then the contact dg-algebra canonically lifts to an quasi-isomorphism class of $\Z$-graded Sullivan dg-algebra $C(Y,\xi)$. Moreover
\[
C(Y,\xi) \simeq \on{hocolim} A^{L_i}(Y,\alpha_i) \qquad\text{for any cofinal sequence $(\alpha_i,L_i)$ as in Definition \ref{def:ADNH}}
\]
\end{lemma}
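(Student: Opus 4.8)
The plan is to take the homotopy colimit in the statement as the \emph{definition} of $C(Y,\xi)$ and then verify in turn that it is $\Z$-graded Sullivan, that it lifts $A(Y,\xi)$, and that it is canonical. First I would note that $c_1(\xi) = 0$ forces $m_\xi = 0$, so that $A_0(Y,\xi)$ carries a genuine $\Z$-grading refining its $\Z/2$-grading. For a cofinal sequence $(\alpha_i,L_i)$ as in Definition \ref{def:ADNH} every generator of $A^{L_i}(Y,\alpha_i)$ is null-homologous, so $A^{L_i}(Y,\alpha_i) \subset A_0(Y,\alpha_i)$ inherits this $\Z$-grading and is $\Z$-graded Sullivan by the action filtration (Lemma \ref{lem:contact_dga_is_Sullivan}). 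Since Pardon's cobordism maps respect the $H_1(Y)$-grading, and since for $c_1(\xi)=0$ there is no $c_1$-ambiguity in the grading shift, the connecting maps $A^{L_i}(Y,\alpha_i) \to A^{L_{i+1}}(Y,\alpha_{i+1})$ preserve the $\Z$-refinement. After fixing cobordism Floer data, the sequence is thus an honest diagram of $\Z$-graded cdg-algebras over $\Q$, and I would set $C(Y,\xi) := \on{hocolim}_i A^{L_i}(Y,\alpha_i)$. Because the homotopy colimit of a sequential diagram of Sullivan algebras is again Sullivan — one builds a Sullivan ladder by factoring each connecting map with Lemma \ref{lem:sullivan_factorization}, exactly as in the proof of Lemma \ref{lem:connected_hocolim}, and takes its colimit — the object $C(Y,\xi)$ is represented by a $\Z$-graded Sullivan cdg-algebra.

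Next I would identify the underlying $\Z/2$-graded object with $A(Y,\xi)$. Writing $U$ for the functor that forgets the $\Z$-grading to a $\Z/2$-grading, $U$ carries Sullivan algebras to Sullivan algebras and commutes with colimits, hence preserves weak equivalences and sequential homotopy colimits, so $U(C(Y,\xi)) \simeq \on{hocolim}_i A^{L_i}(Y,\alpha_i)$ as $\Z/2$-graded cdg-algebras. Choosing the cobordism Floer data compatibly with the action filtration, the inclusions $A^{L_i}(Y,\alpha_i) \hookrightarrow A(Y,\alpha_i)$ form an honest commutative ladder over the diagram $A(Y,\alpha_1) \to A(Y,\alpha_2) \to \cdots$, whose connecting cobordism maps are quasi-isomorphisms by Pardon's theorem, so that $\on{hocolim}_i A(Y,\alpha_i) \simeq A(Y,\xi)$. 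The induced map
\[
\on{hocolim}_i A^{L_i}(Y,\alpha_i) \longrightarrow \on{hocolim}_i A(Y,\alpha_i) \simeq A(Y,\xi)
\]
is a quasi-isomorphism, since on homology it is the natural map $\on{colim}_i H(A^{L_i}(Y,\alpha_i)) \to \on{colim}_i H(A(Y,\alpha_i))$ and both colimits compute $CH(Y,\xi)$ by Lemma \ref{lem:colimit_admissible_CH}, the cofinality of $(\alpha_i,L_i)$, and Lemma \ref{lem:ADNH_homology}. This exhibits $C(Y,\xi)$ as a $\Z$-graded lift of $A(Y,\xi)$.

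For canonicity I would show that the $\Z$-graded quasi-isomorphism type of $C(Y,\xi)$ is independent of the cofinal sequence and the Floer data. Any two cofinal sequences admit a common cofinal refinement in the directed poset of admissible pairs, and the cobordism maps relating them realize the corresponding homotopy colimits as quasi-isomorphic; together with the fact that the $\Z$-grading on generators is intrinsic — it is read off from the spanning-surface-independent integral Conley--Zehnder index, which is well-defined precisely because $c_1(\xi)=0$ — this pins down $C(Y,\xi)$ up to $\Z$-graded quasi-isomorphism.

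The hard part will be that Pardon's cobordism maps, both the connecting maps and the comparison maps above, are only canonical up to chain homotopy rather than cdga-homotopy, so a priori the relevant diagrams commute only up to chain homotopy and need not descend to $\on{Ho}(\CDGA)$. I expect the decisive technical move to be arranging strict commutativity exactly where it is needed: by choosing the cobordism Floer data compatibly with the action filtration the squares relating $A^{L_i}(Y,\alpha_i)$ to $A(Y,\alpha_i)$ commute on the nose, so the comparison in the second paragraph is a genuine morphism of diagrams; the residual ambiguity then becomes invisible upon passing to homology, where chain-homotopic maps agree, and this is all that is needed to detect the quasi-isomorphisms used above.
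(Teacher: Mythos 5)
Your proof inverts the structure of the paper's: you take $\on{hocolim}_i A^{L_i}(Y,\alpha_i)$ as the \emph{definition} of $C(Y,\xi)$ and must then establish canonicity, whereas the paper defines $C(Y,\xi)$ as a $\Z$-graded Sullivan replacement (Lemma \ref{lem:cofibrant_replacement}) of the subalgebra $A_0(Y,\xi)$ --- which is already canonical up to quasi-isomorphism because Pardon's cobordism maps respect the $H_1$-grading, and which is quasi-isomorphic to the full algebra by Lemma \ref{lem:ADNH_homology} --- and only afterwards verifies the homotopy colimit formula, via a two-row ladder comparing the action filtration of a single fixed $A(Y,\alpha)$ with the diagram $A^{L_i}(Y,\alpha_i)$. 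Your intermediate steps (the $\Z$-grading from $c_1(\xi)=0$, the Sullivan ladder via Lemma \ref{lem:sullivan_factorization}, the homology comparison via Lemmas \ref{lem:colimit_admissible_CH} and \ref{lem:ADNH_homology}) are fine and close in spirit to the paper's argument for the ``moreover'' clause.

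The genuine gap is in your canonicity paragraph, and it is exactly the point your last paragraph tries to wave away. To compare the homotopy colimits attached to two cofinal sequences you propose a common (interleaved) refinement; but once you fix cobordism Floer data for the consecutive steps of the interleaved sequence, its restriction to the original $\alpha$-subsequence uses the \emph{composite} maps $\alpha_i \to \beta_i \to \alpha_{i+1}$, which agree with your originally chosen connecting maps $\alpha_i \to \alpha_{i+1}$ only up to chain homotopy. A sequential homotopy colimit is computed from a strict diagram, and replacing connecting maps by chain-homotopic ones preserves $H(\on{hocolim}) = \on{colim} H$ but does not by itself produce a zigzag of cdga quasi-isomorphisms between the two homotopy colimits --- the chain homotopies are not cdga homotopies and do not assemble into a map of diagrams. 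So ``the ambiguity becomes invisible on homology'' suffices to check that a \emph{given} map is a quasi-isomorphism, but not to \emph{construct} the comparison maps your canonicity claim needs. A secondary issue: your comparison map lands in $\on{hocolim}_i A(Y,\alpha_i)$, which carries no $\Z$-grading, so it can only control the $\Z/2$-graded type; an appeal to the intrinsic integral Conley--Zehnder index of generators does not pin down the $\Z$-graded quasi-isomorphism class of the whole algebra. Both problems are resolved by the paper's routing: every $\on{hocolim}_i A^{L_i}(Y,\alpha_i)$ is compared to the single canonical $\Z$-graded object $A_0(Y,\xi)$, and independence of the sequence follows by transitivity rather than by a direct sequence-to-sequence comparison.
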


\begin{proof} We simply take the quasi-isomorphism class of $\Z$-graded Sullivan cdg-algebra to be the equivalence class of any Sullivan cofibrant replacement of $A_0(Y,\xi)$.
\[
C(Y,\xi) \qquad\text{with a quasi-isomorphism}\qquad C(Y,\xi) \xrightarrow{\simeq} A_0(Y,\xi)
\]
This replacement exists and is well-defined by Lemma \ref{lem:cofibrant_replacement}. For the second claim, let $(\alpha_i,L_i)$ be as in Definition \ref{def:ADNH}. By scaling each $\alpha_i$ we may assume that $\alpha_i > \alpha_{i+1}$ and $L_i < L_{i+1}$ with $L_i \to \infty$. Fix a non-degenerate contact form $\alpha > \alpha_1$ and consider the commutative diagram
 \begin{equation} \label{eq:two_rows}
\begin{tikzcd}
A^{L_1}(Y,\alpha) \arrow{r}{} \arrow{d}{} & A^{L_2}(Y,\alpha) \arrow{r}{} \arrow{d}{} & A^{L_3}(Y,\alpha) \arrow{r}{} \arrow{d}{} & A^{L_4}(Y,\alpha) \arrow{r}{} \arrow{d}{} & \dots \\
A^{L_1}(Y,\alpha_1) \arrow{r}{} & A^{L_2}(Y,\alpha_2)  \arrow{r}{} & A^{L_3}(Y,\alpha_3)  \arrow{r}{} & A^{L_4}(Y,\alpha_4) \arrow{r}{} & \dots
\end{tikzcd}
\end{equation}
Here the top row is the action filtration on $A(Y,\alpha)$, the bottom row is the diagram in the lemma statement and the vertical maps are the composition maps
\[
A^{L_i}(Y,\alpha) \to A^{L_i}(Y,\alpha_1) \to \dots \to A^{L_i}(Y,\alpha_i)
\]
By taking homotopy colimits between the diagram (\ref{eq:two_rows}) we get a sequence of maps
\[A_0(Y,\alpha) \subset A(Y,\alpha) = \on{colim} \;A^{L_i}(Y,\alpha) \to \on{hocolim}\; A^{L_i}(Y,\alpha_i) \]
This induces an isomorphism on homology by Lemma \ref{lem:colimit_admissible_CH}, Lemma \ref{lem:ADNH_homology} and the fact that homotopy colimits commute with taking homology.
\end{proof}

\noindent We write the corresponding set of $\Z$-graded augmentation classes of this lift by
\[\on{Aug}_\Z(Y,\xi) := \on{Aug}(C(Y,\xi))\]




\subsection{SADC Contact Manifolds} We next discuss strong asymptotic dynamical convexity as formulated by Zhou \cite{zhou2021symplectic,zhou2022symplectic}. This is a variant of the asymptotic dynamical convexity of Lazarev \cite{lazarev2020contact}, which is a generalization of index posivity (cf. Cieliebak-Oancea \cite{cieliebak2018symplectic}).

\begin{definition} \label{def:SADC} A closed contact manifold $(Y,\xi)$ is \emph{$k$ strongly asymptotically dynamically convex} (\emph{$k$-SADC}) if  $c_1(\xi) = 0$ and there is a sequence $(\alpha_i,L_i)$ of a contact forms and actions with
\[
\alpha_i > \alpha_{i+1} \qquad L_i < L_{i+1} \quad\text{and}\quad L_i \to \infty
\]
and with the property that every closed Reeb orbit $\Gamma$ of $\alpha_i$ with period $L_i$ or less satisfies
\[\Gamma \text{ non-degenerate}\qquad \Gamma \text{ contractible} \qquad\text{and}\qquad |\Gamma| = n - 3 + \CZ(\Gamma) > k\]
Here $Y$ is dimension $2n-1$ and the grading $|\Gamma|$ is over $\Z$ (due to the hypotheses). A $0$-SADC contact manifold will simply be called \emph{SADC}. \end{definition}

\begin{example} \cite[Ex 3.7]{zhou2021symplectic} The standard contact sphere is SADC in dimension three and up. In fact,  the SADC property is preserved by sub-critical surgery preserving the zero Chern class condition by Lazarev \cite{lazarev2020contact} and so all sub-critically fillable contact manifolds with zero Chern class are SADC in dimension three and up. 
\end{example}

Any strongly asymptotically dynamically convex contact manifold is also asymptotically dynamically null-homologous with vanishing Chern class. Thus by Lemma \ref{lem:ADNH_lift_to_Z}, we can view $A(Y,\xi)$ as a $\Z$-graded Sullivan dg-algebra and consider the set of $\Z$-graded augmentation classes.

\begin{proposition} \label{prop:SADC_unique_aug} Let $(Y,\xi)$ be a $k$-SADC contact manifold (that admits a $\Z$-graded augmentation if $k \ge 0$). Then $A(Y,\xi)$ is quasi-isomorphic to a $k$-positively generated cdg-algebra.
\end{proposition}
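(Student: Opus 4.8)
The plan is to exhibit $A(Y,\xi)$ as a sequential homotopy colimit of action-truncated contact dg-algebras, each of which is visibly $k$-positively generated, and then to invoke the positively generated homotopy colimit lemma (Lemma \ref{lem:connected_hocolim}).

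First I would note that a $k$-SADC manifold is automatically ADNH with $c_1(\xi) = 0$: the defining sequence $(\alpha_i,L_i)$ of Definition \ref{def:SADC} consists of admissible pairs all of whose orbits of period at most $L_i$ are contractible, hence null-homologous, so it is a cofinal sequence of the type required by Definition \ref{def:ADNH}. Lemma \ref{lem:ADNH_lift_to_Z} then applies and provides the $\Z$-graded lift $C(Y,\xi) = A(Y,\xi)$ together with a quasi-isomorphism
\[
A(Y,\xi) \simeq \on{hocolim} \; A^{L_i}(Y,\alpha_i).
\]

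Next I would verify that each $A^{L_i}(Y,\alpha_i) = SV_{L_i}(Y,\alpha_i)$ meets the hypotheses of Lemma \ref{lem:connected_hocolim}. It is $\Z$-graded because $c_1(\xi) = 0$ and the relevant orbits are contractible, and it is Sullivan by the action-filtration argument of Lemma \ref{lem:contact_dga_is_Sullivan} applied to the truncation. The crucial point is that its algebra generators are exactly the orientation lines $\mathfrak{o}_\Gamma$ of the good orbits $\Gamma$ of $\alpha_i$ of period at most $L_i$, and the $k$-SADC condition forces $|\Gamma| = n-3+\CZ(\Gamma) > k$ for every such orbit. Hence each $A^{L_i}(Y,\alpha_i)$ is $k$-positively generated; when $k \ge 0$ every generator lies in strictly positive degree, so the zero map is the augmentation required by the lemma, and the composition maps assemble the terms into a sequential diagram of precisely the needed form.

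Applying Lemma \ref{lem:connected_hocolim} then yields that the homotopy colimit, and therefore $A(Y,\xi)$, is quasi-isomorphic to a $k$-positively generated Sullivan cdg-algebra, which is the claim. I expect the main obstacle to be bookkeeping rather than conceptual: one must be careful that the truncated algebras really do have all generators in degree strictly greater than $k$, which is exactly where contractibility and the $\Z$-grading coming from $c_1(\xi) = 0$ are indispensable, since they promote $|\Gamma|$ from a mod-$2$ class to an honest integer exceeding $k$. One also has to confirm that the diagram fed into Lemma \ref{lem:connected_hocolim} is literally the one produced by Lemma \ref{lem:ADNH_lift_to_Z}. Once these identifications are in place, the two lemmas carry the argument.
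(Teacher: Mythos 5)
Your overall strategy is exactly the paper's: identify $A(Y,\xi)$ with $\on{hocolim}\, A^{L_i}(Y,\alpha_i)$ via Lemma \ref{lem:ADNH_lift_to_Z}, observe that the $k$-SADC condition makes each truncated algebra a $k$-positively generated $\Z$-graded Sullivan algebra, and feed the sequential diagram into Lemma \ref{lem:connected_hocolim}. That part is fine.

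There is, however, one genuine gap, and it sits at the one point where you wave your hands: the hypothesis of Lemma \ref{lem:connected_hocolim} that each $A^{L_i}(Y,\alpha_i)$ \emph{admits an augmentation}. You claim that because every generator lies in strictly positive degree, ``the zero map is the augmentation required by the lemma.'' The projection $SV \to \Q$ killing $V$ is always a graded algebra map, but it is an augmentation only if it commutes with the differential, i.e.\ only if $\partial v$ has no constant term for every generator $v$. The contact homology differential lowers $|\Gamma| = n-3+\CZ(\Gamma)$ by one, so when $k = 0$ a generator of degree $1$ can have $\partial\Gamma$ containing a nonzero multiple of the unit (geometrically, a count of rigid holomorphic planes asymptotic to $\Gamma$), and then the trivial projection is not a chain map. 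This is precisely why the proposition carries the parenthetical hypothesis that $(Y,\xi)$ admits a $\Z$-graded augmentation — a hypothesis your argument never invokes, which is the telltale sign something is missing. The fix is the paper's case split: for $k \ge 1$ all generators have degree $\ge 2$, so $\partial v$ has degree $\ge 1$ and the trivial projection is indeed an augmentation; for $k = 0$ one instead pulls back the hypothesized augmentation of $A(Y,\xi)$ along the composition $A^{L_i}(Y,\alpha_i) \to A(Y,\alpha_i) \simeq A(Y,\xi)$. With that repair your proof coincides with the one in the paper.
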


\begin{proof} Fix a sequence of pairs $(\alpha_i,L_i)$ as in Definition \ref{def:SADC} with $\alpha_i$ non-degenerate and a contact form $\alpha$ with $\alpha > \alpha_1$. By Lemma \ref{lem:ADNH_lift_to_Z}, we have a quasi-isomorphism
\[
A(Y,\xi) \simeq \on{colim} A^{L_i}(Y,\alpha_i)
\]
By Definition \ref{def:SADC}, each cdg-algebra $A^{L_i}(Y,\alpha_i)$ is $k$-positively generated. Moreover, each cdg-algebra $A^{L_i}(Y,\alpha_i)$ must admit a $\Z$-graded augmentation. If $k \ge 1$, this is automatic since the trivial projection to $\Q$ is an augmentation. If $k = 1$, then this follows since $A^{L_i}(Y,\alpha_i)$ maps to $A(Y,\alpha_i) \simeq A(Y,\xi)$, which has an augmentation. Thus we may apply Lemma \ref{lem:connected_hocolim} to find that
\[
\on{hocolim} A^{L_i}(Y,\alpha_i) \qquad\text{is $k$-positively generated up to quasi-isomorphism} \qedhere
\]
\end{proof}

\begin{corollary} \label{cor:SADC_unique_aug_1} Let $(Y,\xi)$ be an SADC contact manifold. Then $(Y,\xi)$ admits at most one $\Z$-graded augmentation up to weak equivalence or up to homotopy of augmentations.
\end{corollary}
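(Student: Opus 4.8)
The plan is to reduce everything to the elementary fact that a $0$-positively generated Sullivan cdg-algebra carries exactly one augmentation. Since $(Y,\xi)$ is SADC it is in particular ADNH with $c_1(\xi)=0$, so by Lemma~\ref{lem:ADNH_lift_to_Z} the contact dg-algebra has a well-defined $\Z$-graded Sullivan lift $C(Y,\xi)$, and the $\Z$-graded augmentation classes are exactly those of $C(Y,\xi)$. If $C(Y,\xi)$ admits no $\Z$-graded augmentation then the assertion holds vacuously, so I may assume one exists; this is precisely the hypothesis needed to invoke Proposition~\ref{prop:SADC_unique_aug} with $k=0$. That proposition produces a $0$-positively generated Sullivan cdg-algebra $SV$ that is quasi-isomorphic to $C(Y,\xi)$.

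Next I would record the key local fact. Every generator of $SV$ sits in strictly positive degree while $R$ is concentrated in degree $0$, so any graded algebra map $SV \to R$ must send every generator to $0$. Hence there is a single such map, the trivial augmentation $\epsilon_0$, and it is automatically a chain map because the differential of a positive-degree generator lies in the augmentation ideal. Thus $SV$ has exactly one augmentation, and \emph{a fortiori} both its set of weak-equivalence classes of augmentations and its set of homotopy classes of augmentations are singletons.

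It then remains to transport this uniqueness across the quasi-isomorphism $SV \simeq C(Y,\xi)$. For weak equivalence this is immediate: Lemma~\ref{lem:weak_equivalent_Aug} supplies a canonical bijection $\on{Aug}(SV) = \on{Aug}(C(Y,\xi)) = \on{Aug}_\Z(Y,\xi)$, and the left-hand set is a singleton. For homotopy of augmentations, both $SV$ and $C(Y,\xi)$ are bifibrant (Sullivan, hence cofibrant, and every object is fibrant) and weakly equivalent, so by Theorem~\ref{thm:model_whitehead} the weak equivalence is a homotopy equivalence and precomposition induces a bijection between homotopy classes of maps $C(Y,\xi)\to R$ and homotopy classes of maps $SV \to R$; the latter set has a single element. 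Concretely, choosing a homotopy inverse $\Psi$ to a homotopy equivalence $\Phi\colon SV \to C(Y,\xi)$ (which exists by Lemma~\ref{lem:main_homological}), any two augmentations $\epsilon,\mu$ of $C(Y,\xi)$ pull back to $\epsilon\circ\Phi = \mu\circ\Phi = \epsilon_0$, whence $\epsilon \simeq \epsilon_0\circ\Psi \simeq \mu$ after composing the homotopy $\Phi\circ\Psi \simeq \on{Id}$ with $\epsilon$ and with $\mu$.

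The main obstacle is this last, homotopy-theoretic transfer: one must ensure that the homotopy relation on augmentations is preserved along the quasi-isomorphism. This is where the model-category input is essential — either through the invariance of homotopy classes of maps into the fibrant object $R$ under weak equivalences of cofibrant objects, or, equivalently, through the explicit homotopy inverse of Lemma~\ref{lem:main_homological} together with the fact that post-composition with an augmentation preserves the homotopy relation. The purely algebraic uniqueness on $SV$ is routine; it is making this descend to $C(Y,\xi)$ up to homotopy, rather than merely up to weak equivalence, that requires the Section~\ref{sec:model_categories} machinery.
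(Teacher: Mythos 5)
Your argument is correct and is essentially the paper's: Corollary \ref{cor:SADC_unique_aug_1} is stated there as an immediate consequence of Proposition \ref{prop:SADC_unique_aug}, via exactly your observation that a $0$-positively generated Sullivan algebra admits at most one graded algebra map to $R$, transported across the quasi-isomorphism using Lemma \ref{lem:weak_equivalent_Aug} and the Whitehead-type statements. One small quibble: your claim that the trivial map is ``automatically a chain map'' can fail if a degree-one generator has a constant term in its differential, but since the corollary only asserts uniqueness and you have already disposed of the case where no augmentation exists, this does not affect the proof.
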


\begin{corollary} \label{cor:SADC_unique_aug_2} Let $(Y,\xi)$ be a $1$-SADC contact manifold. Then $(Y,\xi)$ admits a unique $\Z$-graded augmentation up to weak equivalence or up to homotopy of augmentations.
\end{corollary}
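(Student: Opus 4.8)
The plan is to split the statement into existence and uniqueness, handing the uniqueness half directly to Corollary \ref{cor:SADC_unique_aug_1}. The key preliminary observation is that every $1$-SADC manifold is in particular SADC: a defining sequence $(\alpha_i,L_i)$ as in Definition \ref{def:SADC} witnessing $|\Gamma| > 1$ also witnesses $|\Gamma| > 0$, so $(Y,\xi)$ is $0$-SADC. Corollary \ref{cor:SADC_unique_aug_1} therefore applies verbatim and gives \emph{at most one} $\Z$-graded augmentation up to weak equivalence, and likewise at most one up to homotopy of augmentations. The only remaining task is thus to exhibit at least one $\Z$-graded augmentation.

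For existence I would apply Proposition \ref{prop:SADC_unique_aug} with $k = 1$. Its auxiliary hypothesis is automatic in this range, since each truncated algebra $A^{L_i}(Y,\alpha_i)$ is $1$-positively generated and hence carries the trivial projection to $\Q$ as an augmentation. The Proposition then presents the $\Z$-graded lift $C(Y,\xi)$ as quasi-isomorphic to a $1$-positively generated Sullivan cdg-algebra $SV$, whose generating space $V$ is concentrated in gradings strictly larger than $1$, i.e. in gradings $\geq 2$.

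It then remains to check that the evident candidate augmentation of $SV$ is legitimate. The only graded-algebra map $\epsilon_0 : SV \to \Q$ is the one annihilating all of $V$, because a generator of grading $\geq 2$ has no nonzero image in the grading-$0$ target. I claim $\epsilon_0$ is a chain map. For homogeneous $x \in SV$ the element $\partial x$ is homogeneous, and $\epsilon_0(\partial x)$ can be nonzero only if $\partial x$ has a component in grading $0$. But every monomial of $SV$ other than the unit lies in grading $\geq 2$, and $\partial$ shifts grading by a single unit, so $\partial x$ never meets grading $0$; hence $\epsilon_0 \circ \partial = 0$. Therefore $\epsilon_0$ is a $\Z$-graded augmentation of $SV$, and via Lemma \ref{lem:weak_equivalent_Aug} the set $\on{Aug}_\Z(Y,\xi) = \on{Aug}(C(Y,\xi))$ is nonempty. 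Combined with the uniqueness from Corollary \ref{cor:SADC_unique_aug_1}, this produces exactly one $\Z$-graded augmentation class, in both the weak-equivalence and homotopy senses.

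The step demanding the most care is the verification that $\epsilon_0$ is a chain map, which is precisely where the strict inequality $|\Gamma| > 1$ (as opposed to merely $> 0$) is essential: for a bare SADC manifold a generator of grading $1$ could have a differential with a nonzero constant term, obstructing the trivial augmentation, so one genuinely needs the grading gap to survive the passage to the Sullivan representative $SV$ supplied by Proposition \ref{prop:SADC_unique_aug}. All remaining points are formal bookkeeping with gradings together with the identification of augmentation sets under quasi-isomorphism.
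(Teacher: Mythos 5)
Your proof is correct and is essentially the argument the paper intends (the corollary is stated without proof, but the existence half via the trivial projection on a $1$-positively generated Sullivan representative is exactly what the proof of Proposition \ref{prop:SADC_unique_aug} and the proof of Corollary \ref{cor:convex_surfaces} rely on). Your observation that the grading gap $|\Gamma|>1$, rather than $|\Gamma|>0$, is what makes $\epsilon_0\circ\partial=0$ and hence forces existence is precisely the point distinguishing Corollary \ref{cor:SADC_unique_aug_2} from Corollary \ref{cor:SADC_unique_aug_1}.
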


We conclude this note with a few corollaries of Proposition \ref{prop:SADC_unique_aug} and Corollaries \ref{cor:SADC_unique_aug_1}-\ref{cor:SADC_unique_aug_2}. First we note the following obvious consequences for linearized contact homology.

\begin{corollary} The $\Z$-graded linearized contact homology of $(Y,\xi)$ is independent of the $\Z$-graded augmentation up to non-canonical isomorphism if $(Y,\xi)$ is SADC. In this case, we write
\[
LCH(Y,\xi) = LCH_{[\epsilon]}(Y,\xi) \qquad\text{for the unique augmentation class $[\epsilon]$}
\]
\end{corollary}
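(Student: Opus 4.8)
The plan is to read this statement off as an immediate consequence of the uniqueness of $\Z$-graded augmentations from Corollary \ref{cor:SADC_unique_aug_1}, combined with the fact that linearized contact homology is an invariant of the augmentation \emph{class}, not of the chosen augmentation map. Since $(Y,\xi)$ is SADC (that is, $0$-SADC), Corollary \ref{cor:SADC_unique_aug_1} tells us that the set $\on{Aug}_\Z(Y,\xi)$ of $\Z$-graded augmentation classes of the lift $C(Y,\xi)$ has at most one element. So the only substantive point to verify is that the graded module $LCH_{[\epsilon]}(Y,\xi)$ genuinely depends only on the weak equivalence class $[\epsilon]$.

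First I would recall that, by Definition \ref{def:LCH} applied to the $\Z$-graded lift $C(Y,\xi)$ and by Lemma \ref{lem:linearized_homology_class}, the linearized homology $LH(C(Y,\xi),[\epsilon])$ is canonically attached to each class $[\epsilon]$, well-defined up to non-canonical isomorphism. Concretely, if $\epsilon$ and $\epsilon'$ are any two $\Z$-graded augmentations of a Sullivan representative $C$ of the class $C(Y,\xi)$, then Corollary \ref{cor:SADC_unique_aug_1} supplies a weak equivalence of pointed cdg-algebras $(C,\epsilon) \to (C,\epsilon')$. Because $C$ is Sullivan, Lemma \ref{lem:sullivan_linearization} shows that this weak equivalence induces a quasi-isomorphism on linearized complexes, hence an isomorphism $LH(C,\epsilon) \cong LH(C,\epsilon')$ on linearized homology. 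This is exactly the assertion that $LCH_{[\epsilon]}(Y,\xi)$ is independent of the $\Z$-graded augmentation up to non-canonical isomorphism.

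I would then package the conclusion: whenever $(Y,\xi)$ admits a $\Z$-graded augmentation at all, that augmentation is unique up to weak equivalence, so its linearized contact homology is a single isomorphism class, which we denote $LCH(Y,\xi)$; and if no augmentation exists the statement is vacuous. I do not anticipate any real obstacle, as every ingredient is already in place: the content sits in Proposition \ref{prop:SADC_unique_aug} and Corollary \ref{cor:SADC_unique_aug_1}, and the present corollary is just the translation of ``at most one augmentation class'' together with ``linearized homology is a weak-equivalence invariant'' into one statement. The only points requiring a sentence of care are, first, that one works with the $\Z$-graded lift $C(Y,\xi)$ rather than the $\Z/2$-graded algebra $A(Y,\xi)$, so that Corollary \ref{cor:SADC_unique_aug_1} applies verbatim; and second, that the word \emph{non-canonical} in the conclusion is inherited from the non-canonicity already present in Lemma \ref{lem:linearized_homology_class}, since the isomorphism depends on a choice of weak equivalence between the two pointed algebras.
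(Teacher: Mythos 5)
Your proposal is correct and matches the paper's (implicit) argument: the paper states this as an ``obvious consequence'' of Corollary \ref{cor:SADC_unique_aug_1} together with the fact that $LCH_{[\epsilon]}$ depends only on the weak equivalence class of $\epsilon$ (via Lemmas \ref{lem:sullivan_linearization} and \ref{lem:linearized_homology_class}), which is exactly the chain of reasoning you spell out. Your added remarks about working with the $\Z$-graded lift $C(Y,\xi)$ and about the source of the non-canonicity are consistent with the paper's setup.
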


\begin{corollary} Given an exact filling $(W,\lambda)$ of an SADC contact manifold $(Y,\xi)$ with $c_1(W) = 0$, the linearized homology with respect to the augmentation $\epsilon_W$ induced by $W$
\[
LCH(W) = LH(A(Y,\alpha),\epsilon_W)
\]
is independent of the filling $W$ and all choices of Floer data, up to non-canonical isomorphism.\end{corollary}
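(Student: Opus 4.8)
The plan is to leverage the uniqueness of augmentation classes for SADC manifolds (Corollary \ref{cor:SADC_unique_aug_1}) in order to completely sidestep the chain-homotopy ambiguity flagged in Remark \ref{rmk:fillings}. The central observation is that one never needs to compare distinct chain-homotopic representatives of $\epsilon_W$ against one another: it suffices to know that each choice of filling and Floer data produces \emph{some} $\Z$-graded augmentation, since all such augmentations are forced to lie in the single available weak equivalence class. Thus the delicate question of whether chain homotopy refines to weak equivalence simply does not arise.

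Concretely, I would proceed as follows. First, since an SADC contact manifold is ADNH with $c_1(\xi) = 0$, Lemma \ref{lem:ADNH_lift_to_Z} furnishes the $\Z$-graded Sullivan lift $C(Y,\xi)$ together with a quasi-isomorphism $C(Y,\xi) \xrightarrow{\simeq} A_0(Y,\xi)$, while Lemma \ref{lem:ADNH_homology} identifies $A_0(Y,\xi)$ with $A(Y,\xi)$ up to quasi-isomorphism; I interpret $LCH(W)$ throughout via this lift. Second, Pardon's cobordism map gives an augmentation $\epsilon_W : A(Y,\alpha) \to \Q$, and restricting to the $H_1$-trivial part $A_0$ and precomposing with the quasi-isomorphism $C(Y,\xi) \to A_0(Y,\xi)$ produces an augmentation of $C(Y,\xi)$. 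The hypothesis $c_1(W) = 0$ is exactly what guarantees that this augmentation respects the $\Z$-grading, so that $[\epsilon_W] \in \on{Aug}_\Z(Y,\xi)$. Third, Corollary \ref{cor:SADC_unique_aug_1} asserts that $\on{Aug}_\Z(Y,\xi)$ contains at most one class; since $\epsilon_W$ supplies one, there is exactly one class $[\epsilon]$ and necessarily $[\epsilon_W] = [\epsilon]$. Finally, Lemma \ref{lem:linearized_homology_class} (via Definition \ref{def:LCH}) gives $LCH(W) = LH(C(Y,\xi),[\epsilon_W]) = LCH_{[\epsilon]}(Y,\xi)$, which depends only on the unique class and is therefore independent of $W$ and of all Floer data, up to non-canonical isomorphism.

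The main obstacle is the second step: verifying that the cobordism augmentation genuinely descends to a \emph{$\Z$-graded} augmentation of $C(Y,\xi)$. This requires tracking how the cobordism map of \cite{pardon2019contact} interacts with the grading—specifically, checking that a nullhomotopy of $c_1(W)$ supplies a trivialization under which the curve count defining $\epsilon_W$ is degree-preserving for the integer grading, and that passing to the $H_1$-trivial subalgebra $A_0$ (legitimate by the ADNH hypothesis and Lemma \ref{lem:ADNH_homology}) is compatible with $\epsilon_W$. By contrast, once this is in place the remaining steps are purely formal: because the argument invokes only the \emph{existence} of $\epsilon_W$ as an augmentation and never its homotopy type, the gap between chain homotopy and weak equivalence identified in Remark \ref{rmk:fillings} is entirely circumvented.
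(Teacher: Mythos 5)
Your proposal is correct and follows exactly the route the paper intends: the corollary is stated there without proof as an ``obvious consequence'' of Corollary \ref{cor:SADC_unique_aug_1}, the implicit argument being precisely that $\epsilon_W$ is a $\Z$-graded augmentation (via $c_1(W)=0$ and the ADNH lift of Lemma \ref{lem:ADNH_lift_to_Z}), that uniqueness of the augmentation class then pins down $[\epsilon_W]$, and that Lemma \ref{lem:linearized_homology_class} determines the linearized homology up to non-canonical isomorphism. You have also correctly identified both the one substantive verification the paper glosses over (that the cobordism map respects the $\Z$-grading) and the key structural point, namely that only the \emph{existence} of $\epsilon_W$ is needed, so the chain-homotopy ambiguity of Remark \ref{rmk:fillings} never enters.
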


Finally, recent work of Avdek \cite{avdek2023algebraic} gave a beautiful computation of the contact homology of a convex sutured neighborhood of a convex hypersurface. Recall (cf. \cite{honda2019convex,chaidez2024robustly,avdek2023algebraic}) that given an $\R$-invariant contact structure $\xi$ on $\R \times \Sigma$, there is a natural contact sub-manifold $\Gamma \subset \Sigma$ called the dividing set, that divides $\Sigma$ into two ideal Liouville domains $\Sigma_\pm$ filling $\Gamma$. These determine augmentations $\epsilon_\pm$ of the contact dg-algebra of $\Gamma$. In \cite{avdek2023algebraic}, Avdek proved the following result.

\begin{theorem} \cite[Thm 1.1.1]{avdek2023algebraic} \label{thm:avdek} Let $\xi$ be an $\R$-invariant contact structure on $U = \R \times \Sigma$. Let $\Sigma = 0 \times \Sigma$ be the corresponding convex surface with dividing set $\Gamma$. Then
\[CH(U,\xi) = S\Big(\widehat{LCH}_{[\epsilon_+]}(\Gamma)\Big) \text{ if $\epsilon_+$ and $\epsilon_-$ are homotopic} \qquad\text{and}\qquad CH(U,\xi) = 0 \text{ otherwise.}\]
\end{theorem}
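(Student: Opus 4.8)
Since the geometric heart of this statement is Avdek's \cite{avdek2023algebraic} computation, the plan is to take his identification of the contact dg-algebra of $U$ as input and then read off the homology using the linearization machinery of Section \ref{sec:homological_algebra}. Concretely, I would first recall that, for an $\R$-invariant contact form, a neck-stretching degeneration along $\R \times \Gamma$ together with Pardon's virtual fundamental cycles presents $CH(U,\xi)$, up to quasi-isomorphism, as the homology of an explicit free graded-commutative (hence Sullivan) dg-algebra $\mathcal{A}$ whose generators are built from the good Reeb orbits of the dividing set $\Gamma$ and whose action-filtered differential couples the differential of $A(\Gamma)$ to the two augmentations $\epsilon_\pm$ determined by the ideal Liouville domains $\Sigma_\pm$. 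The entire statement then reduces to computing $H(\mathcal{A})$ in terms of these two augmentations.

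For the first case, suppose $\epsilon_+$ and $\epsilon_-$ are homotopic. The key algebraic observation is that, by Lemma \ref{lem:homotopy_linearization}, homotopic augmentations induce the \emph{same} map on linearized homology. I would use this to show that the word-length-one part of the differential on $\mathcal{A}$ computes precisely $\widehat{LCH}_{[\epsilon_+]}(\Gamma)$, which is well-defined up to isomorphism by Definition \ref{def:LCH} (and which equals $\widehat{LCH}_{[\epsilon_-]}(\Gamma)$, giving the expected symmetry in $\Sigma_\pm$). Running the spectral sequence of the word-length filtration on the Sullivan algebra $\mathcal{A}$ exactly as in the proof of Lemma \ref{lem:sullivan_linearization} then collapses $H(\mathcal{A})$ onto the free graded-commutative algebra $S(\widehat{LCH}_{[\epsilon_+]}(\Gamma))$; here one must separately check that the unit $1$ is not a boundary, which holds because the common homotopy class $[\epsilon_+] = [\epsilon_-]$ furnishes a compatible augmentation of $\mathcal{A}$.

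For the second case, I would argue via the unit. When $\epsilon_+$ and $\epsilon_-$ are not homotopic, their discrepancy is detected on linearized homology (again by Lemma \ref{lem:homotopy_linearization}), and in Avdek's model this produces a generator $g$ with $\partial g = c \cdot 1 + (\text{higher word length})$ for some nonzero $c \in \Q$. Since $1$ is then exact and the unit acts as the identity, the unital algebra $\mathcal{A}$ is acyclic, giving $CH(U,\xi) = H(\mathcal{A}) = 0$. A small but necessary bookkeeping step is to match Avdek's notion of homotopy of augmentations with the cdga-homotopy of this paper; because the entire dichotomy is governed by the induced maps on linearized homology, Lemmas \ref{lem:homotopy_linearization} and \ref{lem:sullivan_linearization} show that the two notions coincide for the purposes of this computation.

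The main obstacle is the geometric input of the first paragraph: rigorously extracting the algebraic model $\mathcal{A}$ from the $\R$-invariant moduli spaces requires SFT compactness and Pardon's virtual fundamental cycles, and this is exactly the substantive content supplied by Avdek \cite{avdek2023algebraic}. Granting that model, the remaining work is purely homological and is handled by results already established here, namely the well-definedness of $\widehat{LCH}_{[\epsilon_+]}(\Gamma)$ from Theorem \ref{thm:main_theorem}, the homotopy-invariance of linearized maps from Lemma \ref{lem:homotopy_linearization}, and the word-length spectral sequence from Lemma \ref{lem:sullivan_linearization}.
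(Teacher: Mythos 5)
This statement is not proved in the paper at all: Theorem \ref{thm:avdek} is imported verbatim from Avdek \cite[Thm 1.1.1]{avdek2023algebraic}, and the only surrounding discussion in Section \ref{sec:example_applications} concerns how to \emph{use} it (e.g.\ in Corollary \ref{cor:convex_surfaces}), together with the remark that Avdek's notion of homotopy of augmentations is the cdg-algebra one. So there is no internal proof to compare against; your proposal is an attempt to reprove an external input, and you correctly acknowledge that the geometric heart (the neck-stretching model $\mathcal{A}$ and its differential) is exactly the content supplied by Avdek.

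Taken on its own terms, the algebraic half of your sketch has a genuine gap. The word-length spectral sequence of a Sullivan algebra computes $H(\on{Gr}\mathcal{A}) \simeq S(H(V,d_1))$ on the first page, but it does not degenerate in general: the homology of a Sullivan algebra is \emph{not} the free algebra on the homology of its indecomposables (non-formal minimal models are the standard counterexample). Lemma \ref{lem:sullivan_linearization} only uses the spectral sequence to compare two algebras via a map inducing an isomorphism on associated gradeds; it gives no degeneration statement, so "collapses $H(\mathcal{A})$ onto $S(\widehat{LCH}_{[\epsilon_+]}(\Gamma))$" does not follow from anything established here and must come from the specific structure of Avdek's model. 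Similarly, in the non-homotopic case the existence of a generator $g$ with $\partial g = c\cdot 1 + (\text{higher})$ is asserted rather than derived; Lemma \ref{lem:homotopy_linearization} tells you that homotopic augmentations linearize identically, but its converse (that non-homotopic augmentations are detected by such a unit-killing element) is not a consequence of the lemmas in Section \ref{sec:homological_algebra}. Both gaps are filled only by \cite{avdek2023algebraic} itself, which is consistent with the paper's decision to cite the result rather than prove it.
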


\noindent Here $\widehat{LCH}$ denotes the linearized contact homology with a grading shift of $+1$. Note that the augmentations in Theorem \ref{thm:avdek} must be homotopic in the sense of cdg-algebras (cf. Avdek \cite[Section 14.2]{avdek2023algebraic}). The following is a corollary of Proposition \ref{prop:SADC_unique_aug} and Theorem \ref{thm:avdek}.

\begin{corollary} \label{cor:convex_surfaces} Let $\Sigma$ be a closed convex hypersurface of $(Y,\xi)$ with SADC convex dividing set $\Gamma \subset \Sigma$. Let $U$ be a convex sutured neighborhood of $\Sigma$ and suppose that $c_1(\xi|_U) = 0 \in H^2(U)$. Then
\[
CH(U,\xi|_U) \simeq S\big(\widehat{LCH}(\Gamma)\big)
\]
In particular, the contact homology of $U$ depends only on the dividing set $\Gamma$ in this case. \end{corollary}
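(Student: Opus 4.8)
The plan is to combine the uniqueness of augmentations for the SADC dividing set $\Gamma$ (Corollary \ref{cor:SADC_unique_aug_1}) with Avdek's dichotomy (Theorem \ref{thm:avdek}). The point is that SADC rigidity forces the two filling augmentations $\epsilon_\pm$ to be homotopic, which selects the non-trivial case of Theorem \ref{thm:avdek}; a second application of uniqueness then removes the dependence of the right-hand side on the choice of augmentation.

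First I would set up the $\Z$-graded picture on $\Gamma$. Since $\Gamma$ is SADC, its contact structure has vanishing first Chern class, so Lemma \ref{lem:ADNH_lift_to_Z} provides the canonical $\Z$-graded lift $C(\Gamma)$ of its contact dg-algebra together with the set $\on{Aug}_\Z(\Gamma) = \on{Aug}(C(\Gamma))$ of $\Z$-graded augmentation classes. The two ideal Liouville domains $\Sigma_\pm$ filling $\Gamma$ induce augmentations $\epsilon_\pm$ of the contact dg-algebra of $\Gamma$. The first technical point is to verify that these lift to genuine $\Z$-graded augmentations, i.e.\ to elements of $\on{Aug}_\Z(\Gamma)$. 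This is precisely where the hypothesis $c_1(\xi|_U) = 0$ enters: it forces the relevant first Chern classes of the fillings $\Sigma_\pm \subset \Sigma \subset U$ to vanish, and hence the augmentation maps to respect the $\Z$-grading rather than merely the underlying $\Z/2$-grading.

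The heart of the argument is then immediate. By Corollary \ref{cor:SADC_unique_aug_1}, the set $\on{Aug}_\Z(\Gamma)$ contains at most one class up to homotopy of augmentations. Since $\epsilon_+$ and $\epsilon_-$ both represent elements of this set, they must represent the same class, so $\epsilon_+ \simeq \epsilon_-$. It is essential here that Corollary \ref{cor:SADC_unique_aug_1} gives uniqueness with respect to homotopy of augmentations, not merely weak equivalence, so that the conclusion matches the homotopy relation required in Theorem \ref{thm:avdek} (cf.\ the remark preceding the corollary). With $\epsilon_+$ and $\epsilon_-$ homotopic, Avdek's Theorem \ref{thm:avdek} lands us in the first case and yields
\[
CH(U,\xi|_U) = S\big(\widehat{LCH}_{[\epsilon_+]}(\Gamma)\big).
\]

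Finally I would remove the subscript. Applying Corollary \ref{cor:SADC_unique_aug_1} once more, the $\Z$-graded augmentation class of $\Gamma$ is unique, so $\widehat{LCH}_{[\epsilon_+]}(\Gamma)$ is independent of the chosen class and may be written simply as $\widehat{LCH}(\Gamma)$. This gives $CH(U,\xi|_U) \simeq S\big(\widehat{LCH}(\Gamma)\big)$ and shows that the contact homology of $U$ depends only on $\Gamma$. The main obstacle is entirely in the bookkeeping of the first step: one must confirm that the filling augmentations $\epsilon_\pm$ are actually $\Z$-graded augmentations of $C(\Gamma)$, so that Corollary \ref{cor:SADC_unique_aug_1} applies to them, and that the homotopy relation supplied there is the same relation demanded by Theorem \ref{thm:avdek}. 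Once these compatibilities are checked, the corollary follows formally from the two cited results.
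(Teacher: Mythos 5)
Your argument is correct in outline, but it takes a genuinely different route from the paper at the key step, namely deciding which case of Theorem \ref{thm:avdek} applies. You argue \emph{directly} that $\epsilon_+$ and $\epsilon_-$ are homotopic, by noting that both lift to $\Z$-graded augmentations (since $c_1(\xi|_U)=0$ forces $c_1(\Sigma_\pm)=0$) and then invoking the ``up to homotopy of augmentations'' clause of Corollary \ref{cor:SADC_unique_aug_1}. The paper instead argues \emph{indirectly}: since $S(\widehat{LCH}_{[\epsilon_+]}(\Gamma))$ is never zero, Avdek's dichotomy reduces the claim to showing $CH(U,\xi|_U)\neq 0$, and this is proved by observing that Avdek's orbit correspondence $\gamma\mapsto\hat\gamma$ with $|\hat\gamma|=|\gamma|+1$ makes $U$ a $1$-SADC contact manifold, so that $A(U,\xi|_U)$ admits an augmentation by Corollary \ref{cor:SADC_unique_aug_2} and hence has nonzero homology. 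The trade-off is worth noting. Your route is shorter and never needs the $1$-SADC property of $U$, but it leans on the strongest reading of Corollary \ref{cor:SADC_unique_aug_1}: the homotopy relation in Theorem \ref{thm:avdek} is a chain-level dg-algebra homotopy on a specific model $A(\Gamma,\alpha)$, whereas the uniqueness statement is most transparently proved on the $k$-positively generated replacement of $C(\Gamma)$, so one must transfer homotopy classes of augmentations across the zig-zag of weak equivalences $A(\Gamma,\alpha)\hookleftarrow A_0(\Gamma,\alpha)\leftarrow C(\Gamma)\simeq(\text{positively generated model})$. This transfer is available --- all objects involved are Sullivan, hence cofibrant, so Theorem \ref{thm:model_whitehead} upgrades these weak equivalences to homotopy equivalences and identifies homotopy classes of augmentations --- but you should say so explicitly, since without it ``same class in $\on{Aug}_\Z(\Gamma)$'' only gives weak equivalence of augmentations, which is not the relation Theorem \ref{thm:avdek} asks for. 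The paper's detour through $CH(U,\xi|_U)\neq 0$ buys exactly the avoidance of this model-dependence: non-vanishing of homology is a quasi-isomorphism-invariant statement, so no chain-level homotopy between $\epsilon_+$ and $\epsilon_-$ ever needs to be produced.
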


\begin{proof} One may check that $c_1(\xi|_U) = 0$ implies that $c_1(\Sigma_\pm) = 0$ so that the exact fillings $\Sigma_+$ and $\Sigma_-$ induce $\Z$-graded augmentations $\epsilon_+$ and $\epsilon_-$ on $\Gamma$. By Corollary \ref{cor:SADC_unique_aug_1}
\[
LCH_{[\epsilon_+]}(\Gamma) = LCH(\Gamma) \text{ is independent of $\epsilon_+$}
\]
Thus by Theorem \ref{thm:avdek}, it suffices to show that $CH(U,\xi|_U) \neq 0$. In \cite[Thm 1.2.1]{avdek2023algebraic} Avdek constructs for any cofinal sequence $(\alpha_i,L_i)$ of admissible pairs for the dividing set $\Gamma$, a corresponding cofinal sequence of cofinal pairs $(\beta_i,L_i)$ for $U$ such that there is a bijection $\gamma \mapsto \hat{\gamma}$ from Reeb orbits of $\Gamma$ to Reeb orbits of $U$ with $|\hat{\gamma}| = |\gamma| + 1$. This implies that if $\Gamma$ is $0$-SADC, then $U$ is $1$-SADC. Then by Corollary \ref{cor:SADC_unique_aug_2}, $A(U,\xi|_U)$ must have an augmentation, and so cannot have zero homology.  \end{proof}

\subsection*{Acknowledgements.} We would like to thank the anonymous referee for various helpful comments, including the suggestion for a more detailed proof of Corollary \ref{cor:convex_surfaces}.

\bibliographystyle{hplain}
\bibliography{standard_bib}

\end{document}